\theoremstyle{definition} \newtheorem{dfn}{Definition}[section]
\theoremstyle{definition} 
\theoremstyle{definition} \newtheorem*{axi*}{Axiom}
\theoremstyle{definition} \newtheorem{prp}[dfn]{Proposition}
\theoremstyle{definition} \newtheorem{lem}[dfn]{Lemma}%[section]
\theoremstyle{definition} \newtheorem{thm}[dfn]{Theorem}%[section]
\theoremstyle{definition} \newtheorem{cor}[dfn]{Corollary}%[section]
\theoremstyle{definition} %[section]
\theoremstyle{definition} \newtheorem{que}[dfn]{Question}
\theoremstyle{remark} 
\theoremstyle{remark}  
\definecolor{darkgreen}{rgb}{0,.6,0}
\newcommand \dd {\mid\mid}
\newcommand \N {\mathbb N}
\newcommand \Z {\mathbb Z}
\newcommand \Q {\mathbb Q}
\newcommand \R {\mathbb R}
\newcommand \C {\mathbb C}
\newcommand \D {\mathcal D}
\newcommand \lcm {\text{lcm}}
\newcommand \la {\lambda}
\newcommand \La {\Lambda}
\newcommand*\patchAmsMathEnvironmentForLineno[1]{%
	\expandafter\let\csname old#1\expandafter\endcsname\csname #1\endcsname
	\expandafter\let\csname oldend#1\expandafter\endcsname\csname end#1\endcsname
	\renewenvironment{#1}%
	{\linenomath\csname old#1\endcsname}%
	{\csname oldend#1\endcsname\endlinenomath}}% 
\newcommand*\patchBothAmsMathEnvironmentsForLineno[1]{%
	\patchAmsMathEnvironmentForLineno{#1}%
	\patchAmsMathEnvironmentForLineno{#1*}}%
\newcommand\blfootnote[1]{%
	\begingroup
	\renewcommand\thefootnote{}\footnote{#1}%
	\addtocounter{footnote}{-1}%
	\endgroup
}
\title{Spectral sets and tiles in \(\Z_p^2 \times \Z_q^2\)}
\author{Thomas Fallon \thanks{The Graduate Center, the City University of New York \newline $\textrm{ \ \ \ \ }$ e-mail: tfallon@gradcenter.cuny.edu}, Gergely Kiss\thanks{Alfr\'ed R\'enyi Institute of Mathematics, Analysis Department \newline $\textrm{ \ \ \ \ }$ e-mail: kigergo57@gmail.com }, G\'abor Somlai\thanks{E\"otv\"os Loránd University, Department of Algebra and Number Theory \newline $\textrm{ \ \ \ \ }$ e-mail: zsomlei@caesar.elte.hu}}
\date{\today}
\begin{document}
	
	\maketitle
	
	\begin{abstract} In this paper we study Fuglede's conjecture on the direct product of two cyclic groups. After collecting the known results we prove that Fuglede's conjecture holds on $\Z_{pq} \times \Z_{pq} \cong \Z_p^2 \times \Z_q^2$. %by developing a method based on ideas from discrete geometry.
		
	\end{abstract}
	
	\blfootnote{{\bf Keywords}: spectral set, tiling, finite geometry, Fuglede's conjecture  }%
	\blfootnote{%
		%   1991 Mathematics Subject Classification
		{\bf AMS Subject Classification (2010)}:
		43A40, 43A75, 52C22, 05B25} 	
	\section{Introduction}
	The study of exponential bases and the Fuglede conjecture attracted the attention of researchers decades ago and still now it is an active area of research. In 1974 Bent Fuglede was studying Segal's problem on partial differential operators \cite{F1974} and this led him to find a connection between spectral sets and tiles. His conjecture was stated originally on $\R^n$ as follows. Let $\Omega \subset \R^n$ be bounded measurable set with positive Lebesgue measure. $\Omega$ is called {\it spectral} if there exists a set $\Lambda\subset \R^n$ such that $\{e^{\langle x,\lambda \rangle}:\lambda \in \Lambda\}$ is an orthogonal basis of $L^2(\Omega)$. In this case $\La$ is called a {\it spectrum} of $\Omega$. $\Omega$ is called a {\it tile} if there exists a set $T\subset \R^n$ such that the translates $\Omega+t \ (t\in T)$ cover $\R^n$ exactly once up to a set of measure zero. In this case $T$ is called the tiling complement of $\Omega$. Fuglede's conjecture states that $\Omega$ is spectral if and only if $\Omega$ is a tile. 
	
	After some valuable positive results on $\R^n$ (e.g. \cites{F1974, IKP99, L2001}), Tao \cite{T2004} disproved the conjecture in 2004 by exhibiting a spectral set which is not a tile in $\mathbb{R}^n$ for $n\ge 5$. This result spurred intense research in this area. Namely, some non-tiling spectral sets were found in $\R^n$ for $n\ge 4$ \cite{Matolcsi2005} and later for $n\ge 3$ \cite{KMkishalmaz}, and  non-spectral tiles were shown to exist in $\mathbb{R}^n$ for $n\ge 3$ \cite{FMM2006} (for further references see \cites{FR2006, KM2}).
	%kolountzakis matolcsi 5 dim T-- nem S, revesz Farkas 4 dim
	Remarkably, both directions of the conjecture are still open in $\R$ and $\R^2$. 
	All of the aforementioned counterexamples to the conjecture in $\R^n$ are based on {\bf Fuglede's conjecture on finite abelian groups}.
	
	Let $G$ be a finite abelian group and $\widehat{G}$ the dual group (i.e. the set of irreducible representations of $G$, which can be considered as a group) and it is isomorphic to $G$. The elements of $\widehat{G}$ can be indexed by the elements of $G$. Then $S\subset G$ is {\it spectral} if and only if there exists a $\La\in G$ such that ($\chi_l)_{l\in \La}$ is an orthogonal base of complex valued functions defined on $S$.
	For a finite group $G$ and a set $S\subseteq G$ we say that $S$ is {\it a tile} of $G$ if there is a $T \subset G$ such that $S+T=G$ and $|S|\cdot |T|=|G|$. This we denote by $S\bigoplus T$. The discrete version of the problem can be formulated as follows. 
	\begin{que}
		For a given abelian group $G$ is that true that the spectral sets and the tiles coincide?
	\end{que}

	Note that if $G$ is a finite abelian group, then $\La$ is a spectrum for $\Omega$ if and only if $\Omega$ is a spectrum for $\La$ (under the identification $\widehat{\widehat{G \ }}=G$), and so in this case we say that $(\Omega,\La)$ is a {\it spectral pair} and we have $|\Omega|=|\La|$.
	Although the conjecture is known to be false in general, many positive examples are also known (see below) which indicates the following problem.
	
	The connections between spectral sets and tiles on $\mathbb{Z}$ and on finite cyclic groups $\mathbb{Z}_N$ were settled in the last two decades by Coven and Meyerowich \cite{CM1999}, \L aba \cite{L2002} and Dutkay and Lai \cite{DutkayLai}. It was shown in \cite{DutkayLai} that the tile-spectral direction of Fuglede's conjecture on $\R$ holds if and only if it holds for $\Z$, which happens if and only if it holds for every finite cyclic group.
	They also showed that if the spectral-tile direction holds on $\R$, then it holds for $\Z$ and for every cyclic group. On the other hand, the reverse implications are still open.
	
	Several attempts have been made to prove that the discrete version of Fuglede's conjecture holds for some special cyclic groups. It was shown that if $G=\Z_N$ and $N$ is square-free or $N=p^mq^n$ or $N=p^n d$, where $d$ is square-free, then any tile of $G$ is spectral \cites{L2002, MK20, Shi18}.
	For the spectral-tile direction only partial results are known. If $G=\Z_N$ and one of the following conditions holds: $N=p^n$ \cite{L2002}, $N=p^nq$ \cite{MK17}, $N=p^nq^2$ \cite{KMSV2020}, $N=pqr$ \cite{Shi18}, $N=p^2qr$ \cite{So2019} or (quite recently) $N=pqrs$ \cite{KMSV2021} or $N=p^nq^k$, where $\min{(n,k)}\le 6$ \cite{MK20}, then a subset of $G$ is spectral if and only if it is a tile.

	{\bf Fuglede conjecture on the direct product of abelian groups.}
	Tao's \cite{T2004} example of a spectral set which does not tile comes from a non-tiling spectral set in $\Z_3^5$. On the other hand, it was proved in \cite{Z_p^2} that  Fuglede's conjecture holds in $\Z_p^2$ for every prime $p$. %Thus it makes sense
	Recently, the investigation of the problem on elementary abelian $p$-groups has become of interest. For odd primes $p$, non-spectral tiles have been exhibited in $\Z_p^5$ \cite{atenetal} and in $\Z_p^4$ by Ferguson and Sothanaphan \cite{fergsoth2019} and independently by Mattheus \cite{SM}.
	If $p=2$, then the situation is slightly different. It was shown that Fuglede's conjecture fails for $\Z_2^d$ if $d\ge 10$ \cite{fergsoth2019}, and holds if $d\le 6$ \cites{fergsoth2019,fsadd}. For $7\le d\le 9$ the answer is not known.
	Fuglede's conjecture for $\Z_p^3$ when $p$ is an odd prime is still wide open, although partial results have been obtained recently for $p \le 5$ in \cite{birklbauer} and for $p \le 7$ in \cite{zp37}.
	Note that the tile-spectral direction of the conjecture holds for $\Z_p^3$, see \cite{atenetal}.
	
	Clearly, Fuglede's conjecture does not hold on $\R^2$ if Fuglede's conjecture does not hold on an abelian group of the form $\Z_m\times\Z_n$. Therefore, it is reasonable to investigate the problem for such direct products.
	R. Shi verified the conjecture for $\Z_{p^2} \times \Z_p$ \cite{Shi2020}.
	It was proved by the second and the third author that Fuglede's conjecture holds on $\mathbb{Z}_p^2 \times \mathbb{Z}_q\cong \Z_{pq} \times \Z_p$ \cite{KS2021}.

	These previously presented results on some direct product of cyclic groups and the purpose of finding a potential counterexample in $\R^2$ motivate our main result which is the following. 
	\begin{thm}\label{t1}
		Fuglede's conjecture holds for $\mathbb{Z}_p^2 \times \mathbb{Z}_q^2\cong \Z_{pq} \times \Z_{pq}$, where $q$ and $p$ are different primes.
	\end{thm}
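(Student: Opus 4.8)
\emph{Proof strategy.} The plan is to combine a Fourier-analytic reformulation with a reduction to previously settled cases and, for the one essentially new range of sizes, a finite-geometric rigidity argument. Identify $\widehat G\cong G$ via the standard pairing and, for $S\subseteq G$, write $\widehat{1_S}(\xi)=\sum_{s\in S}\omega^{\langle\xi,s\rangle}$ and $\mathcal Z(S)=\{\xi\neq 0:\widehat{1_S}(\xi)=0\}$. We use throughout the two reformulations: $S$ is spectral with spectrum $\Lambda$ iff $|\Lambda|=|S|$ and $(\Lambda-\Lambda)\setminus\{0\}\subseteq\mathcal Z(S)$; and $S\bigoplus T=G$ iff $|S|\cdot|T|=|G|$ and $\mathcal Z(S)\cup\mathcal Z(T)=\widehat G\setminus\{0\}$. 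Since both properties are translation invariant we assume $0\in S$, and since both descend to subgroups, if $S$ lies in a coset of a proper subgroup $H<G$ we may replace $G$ by $H$. Because $\gcd(p,q)=1$, every subgroup of $G=\mathbb{Z}_p^2\times\mathbb{Z}_q^2$ splits as $U\times V$ with $U\le\mathbb{Z}_p^2$, $V\le\mathbb{Z}_q^2$, so every proper subgroup is isomorphic to some $\mathbb{Z}_p^a\times\mathbb{Z}_q^b$ with $(a,b)\neq(2,2)$, for all of which Fuglede's conjecture is already known (the largest, $\mathbb{Z}_p^2\times\mathbb{Z}_q$ and $\mathbb{Z}_p\times\mathbb{Z}_q^2$, by \cite{KS2021}, and $\mathbb{Z}_p^2$, $\mathbb{Z}_q^2$ by the results quoted above). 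Hence we may assume $S$ is \emph{primitive}, i.e. not contained in any coset of a proper subgroup; likewise, whenever the analysis produces (in suitable coordinates) a direct-product decomposition $S=S'\times S''$ with $S'\subseteq\mathbb{Z}_p^2$, $S''\subseteq\mathbb{Z}_q^2$, we are done by the known cases together with the standard product constructions for spectra and tiling complements. The strategy is therefore always to force either such a splitting or a passage to a proper subgroup.

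The second ingredient is a fibre analysis via vanishing sums of roots of unity. Write $\xi=(\xi_p,\xi_q)\in\widehat{\mathbb{Z}_p^2}\times\widehat{\mathbb{Z}_q^2}$ and, for $a\in\mathbb{Z}_p^2$, put $S_a=\{b\in\mathbb{Z}_q^2:(a,b)\in S\}$, so that $\widehat{1_S}(\xi_p,\xi_q)=\sum_{a\in\mathbb{Z}_p^2}\widehat{1_{S_a}}(\xi_q)\,\omega_p^{\langle\xi_p,a\rangle}$. For $\xi_p\neq 0$ the homomorphism $a\mapsto\langle\xi_p,a\rangle$ is onto $\mathbb{Z}_p$ with kernel a line $L=L_{\xi_p}\le\mathbb{Z}_p^2$, and since $1,\omega_p,\dots,\omega_p^{p-1}$ are $\mathbb{Z}[\omega_q]$-linearly independent modulo the single relation $\sum_j\omega_p^j=0$, we get: $\widehat{1_S}(\xi_p,\xi_q)=0$ iff the fibre-sums $\sum_{a\in L+v}\widehat{1_{S_a}}(\xi_q)$ agree over all $p$ cosets $L+v$ of $L$. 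Dually, $\widehat{1_S}(0,\xi_q)$ is a sum of $|S|$ $q$-th roots of unity, hence by Lam--Leung it is nonzero whenever $q\nmid|S|$; combined with the tiling identity this forces, in that case, the complement $T$ to have all $q^2$ fibres over $\mathbb{Z}_q^2$ of equal size, and symmetrically with $p$ and $q$ interchanged. These statements locate $\mathcal Z(S)$ on the coordinate subgroups $\widehat{\mathbb{Z}_p^2}\times\{0\}$ and $\{0\}\times\widehat{\mathbb{Z}_q^2}$ and let us propagate balancedness through the fibres.

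Now run a case analysis on $m=|S|\in\{1,p,p^2,q,q^2,pq,p^2q,pq^2,p^2q^2\}$. The cases $m=1$, $m=p^2q^2$ are trivial, and the substitution $p\leftrightarrow q$ identifies $\{q,q^2,pq^2\}$ with $\{p,p^2,p^2q\}$, so it suffices to treat $m\in\{p,p^2,pq,p^2q\}$. When $m\in\{p,p^2\}$ we have $q\nmid m$, so the roots-of-unity step pins down the complementary fibre structure; feeding this into the coset-averaging criterion together with primitivity forces the $\mathbb{Z}_q^2$-support of $S$ into a single line, i.e. $S$ lies in a coset of $\mathbb{Z}_p^2\times V$ with $|V|\le q$, which contradicts primitivity unless we are already inside $\mathbb{Z}_p^2\times\mathbb{Z}_q$ or $\mathbb{Z}_p^2$, where the conjecture holds. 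The case $m=p^2q$ is handled by the dual of this step: here $p\nmid|T|$ for a tiling complement (and a dual argument on the spectrum, which has index $q$ in $\widehat G$, plays the same role in the spectral direction), which forces every fibre of $S$ over $\mathbb{Z}_p^2$ to have size $q$; a further application of the coset-averaging criterion makes these fibres parallel lines in $\mathbb{Z}_q^2$, so $S$ is a union of cosets of a line-subgroup and descends to $\mathbb{Z}_p^2\times\mathbb{Z}_q$. This leaves the genuinely new case $m=pq$, in which $S$ may simultaneously be "two-dimensional" over $\mathbb{Z}_p^2$ and over $\mathbb{Z}_q^2$.

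In the case $m=pq$ the coset-averaging criterion says that along every direction in which $\widehat{1_S}$ vanishes the fibres of $S$ in $\mathbb{Z}_q^2$ are equidistributed among the $p$ lines of the corresponding parallel class of $AG(2,p)$, and symmetrically for the fibres in $\mathbb{Z}_p^2$ among the parallel classes of $AG(2,q)$. Analysing which families of parallel classes can occur — using that $AG(2,p)$ has exactly $p+1$ parallel classes and $AG(2,q)$ exactly $q+1$, together with the incidence and counting relations among them — one shows that, up to the problem's invariances, $S$ is either a union of cosets of a line-subgroup (handled by the subgroup and product reductions) or a "linear graph" $\{(x,\varphi(x)):x\in W\}$ over a suitable subgroup $W$, for which a tiling complement and a spectrum are explicitly interchangeable. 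Establishing this finite-geometric rigidity — proving that the admissible line-systems must be full parallel pencils and excluding sporadic configurations — is the heart of the matter and the step I expect to be the main obstacle; it is precisely here that the design-theoretic counting does the real work.
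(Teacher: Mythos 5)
Your outline shares the paper's general toolkit (zero sets of $\hat{1}_S$, fibre equidistribution via vanishing sums of roots of unity, case analysis by size), but it has two concrete gaps that prevent it from being a proof. First, in the spectral$\Rightarrow$tile direction nothing guarantees a priori that $|S|$ divides $|G|$, yet your case analysis runs only over $m=|S|\in\{1,p,p^2,q,q^2,pq,p^2q,pq^2,p^2q^2\}$. The paper's analysis is by $\gcd(|S|,p^2q^2)$, and the genuinely hard range is precisely the non-divisor sizes, e.g. $pq\mid |S|$, $p^2\nmid|S|$, $q^2\nmid|S|$ with $pq<|S|<pq\min\{p,q\}$: one must show no such spectral set exists at all (the paper's ``leaf structure'' analysis, Assumption A and Propositions 4.8--4.10). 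Your scheme never meets these sizes, and even in your $m=pq$ case the finite-geometric rigidity you call ``the main obstacle'' is exactly the content that has to be supplied; as written, the heart of the theorem is missing.

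Second, the structural conclusions you want to force are false for graph-type sets, so the announced endgame (``force a product splitting or a passage to a proper subgroup'') cannot close several cases. Take $S=\{(a,f(a)):a\in\Z_p^2\}$ for an arbitrary map $f\colon\Z_p^2\to\Z_q^2$: it has size $p^2$, is spectral (spectrum $\widehat{\Z_p^2}$, since every nontrivial character of order $p$ sums to zero on it), tiles with $\Z_q^2$, is primitive, is not a product, and its $\Z_q^2$-support need not lie in a line --- contradicting your claim that the $m=p^2$ analysis forces the support into a single line. Likewise a tile of size $p^2q$ may have $\Z_q^2$-fibres which are non-parallel lines, or not lines at all, as long as they admit a common complement of size $q$ (already visible for $q=2$ with fibres $\{(0,0),(1,0)\}$ and $\{(0,0),(0,1)\}$ complemented by $\{(0,0),(1,1)\}$); such a set is not a union of cosets of one line-subgroup and does not descend to $\Z_p^2\times\Z_q$. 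The correct conclusions in these cases are of the form ``$S$ is a transversal of $\Z_q^2$-cosets, hence tiles'' or ``$A$ is a graph, hence spectral,'' and in the tile$\Rightarrow$spectral direction for $|A|=p^2q$ the paper needs an additional nontrivial input (Theorem 3.1 of \cite{atenetal} on directions missed by tiling pairs) that your sketch does not provide. So both the reduction strategy and the case-by-case structure theory need to be replaced, not just completed.
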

	Moreover as we will see in Section \ref{sec5} that if a set $A$ tiles $\mathbb{Z}_p^2 \times \mathbb{Z}_q^2$, then it tiles with a subgroup as well. 
	\section{Preliminaries and Notation}
	\begin{dfn}
		Given a finite abelian group \(G\), we can define the {\it{character functions}} as homomorphisms \(\chi \colon G \to S^1 \leq \C^*\).
	\end{dfn}
	
	\begin{dfn}
		Given an finite abelian group \(G\), its {\it dual group} \(\widehat{G}\) is the collection of its {\it{character functions}} whose operation is point-wise multiplication.
	\end{dfn}
	
	\begin{dfn}
		The {\it order} of a character \(\chi\) is just as in any group the smallest \(n\) such that \(\chi^n\) is the trivial character.
	\end{dfn}
	
	It is known that after choice of generators, \(G\) can be viewed as \(G = \bigoplus_{i=1}^N \Z_{n_i}^{d_i}\).
	The exponent of this group is then \(M = \lcm_{i=1}^N n_i\).
	There is a dot product $\langle x, y \rangle\colon G^2 \to \Z_M$, where \(\langle x, y \rangle \coloneqq \sum_{i=1}^N \frac{M}{n_i} (x_i \cdot y_i)\).
	This product can then be used to give an isomorphism between \(G\) and \(\widehat{G}\) where for \(g \in G\), \(\chi_g \coloneqq \left(x \mapsto e^{\frac{i\pi}{M} \langle x, g \rangle}\right)\).

	Using the character functions defined above, we can define the Fourier transform of a function \(f \in L^2(G)\) as
	\[
	\hat{f}(\chi) \coloneqq |G|^{-1} \sum_{g \in G} f(g) \overline{\chi(g)} \quad \quad (\chi \in \widehat{G})
	\]
	with inverse Fourier transform of a function \(h \in L^2(\widehat{G})\)
	\[
	\check{h}(g) \coloneqq \sum_{\chi \in \widehat{G}} h(\chi)\chi(g) \quad  (g \in G).
	\]
	
	Note that for an indicator function $1_S$ of \(S \subseteq G\), its Fourier transform is \(\hat{1}_S(\chi) = |G|^{-1} \sum_{g \in S} \overline{\chi(g)}\).  
	
	A multiset on a group $G$ is a function from $G$ to $\mathbb{N}$.
	\begin{dfn}
		Related to this we can have a character function act on a set in a similar way and define
		\[
		\chi(S) \coloneqq \sum_{s \in S} \chi(s).
		\]
	\end{dfn}
	It should be noted that \(\chi(S) = |G| \hat{1}_S(\chi)\), and \(\chi(S) = 0\) exactly when \(\hat{1}_S(\chi) = 0\).
	
	\begin{dfn}
		A convenient notation to use when \(S \subseteq G\) is to say \(n \dd |S|\) when \(\gcd(|S|, |G|) = n\). 
	\end{dfn}
	
	\begin{dfn}
		Where \(S \subseteq G\) and \(\alpha \in \widehat{G}\), we denote the projection of \(S\) as a multi-set to \(G / \alpha^\perp\) by \(S_\alpha\), where $\alpha^\perp$ is the kernel of $\alpha$. More precisely let 
		\[ S_\alpha(i)=\sum_{ \{ x ~\colon~ \alpha(x)=i \} } S(x).\]
		Note that $S_{\alpha}$ depends only on the kernel of $\alpha$.
		This will often be used because the value of \(\hat{1}_S(\alpha)\) depends only on \(S_\alpha\). 
	\end{dfn}
	
	\begin{dfn}
		Given an element or a subset \(S\) in \(G\), we talk about the perpendicular group \(S^\perp\) in \(\widehat{G}\).  Define the perpendicular group to \(S\) as 
		\[
		S^\perp \coloneqq \{\chi \in \widehat{G} \colon \chi(s) = 1 ~ (\forall s \in S)\}
		\]
	\end{dfn}

	\begin{dfn}
		Where \(S \subset G\), we denoted the projection of \(S\) as a multiset to the Sylow \(p-\)group by \(S_p\).  Note that since \(G\) is abelian, this is well defined. \end{dfn}
	
	\begin{dfn} \label{defdir} Let $G$ be a finite abelian group. For $v, w\in G$, we write $v\sim w$ if $v$ and $w$ generate the same subgroup of $G$. Clearly, $\sim$ is an equivalence relation. Its equivalence classes are called the directions in $G$ and they are represented by $[v]$ for every $v\in G$.
		Let $S\subseteq G$ be a set and $v\in G$. We say that $S$ determines the direction $[v]$, if there is a $w\in S-S\subseteq G$ such that $w\sim v$. The set of directions determined by $S$ is denoted by $\mathcal{D}(S)$. For $G=\Z_p^2\times \Z_q^2$ we denote an element by $(a,b)$ and the corresponding direction by $[a,b]$, where $a\in \Z_p^2$ and $b\in \Z_q^2$. For any element $x=(a,b)\in \Z_p^2\times \Z_q^2$, where $a\in\Z_p^2$ and $b\in \Z_q^2$, we write $x=a+b$. 
	\end{dfn}
	
	\section{Useful lemmas}
	In this section we collect a few facts that will be used all throughout the paper. 
	
	We recall that for finite abelian groups $G$ if $S\subseteq G$ is a spectral set and $\La\subseteq \widehat{G}$ is its spectrum, then $S\subseteq G=\widehat{\widehat{G \ }}$ is a spectrum for $\Lambda$, so $\Lambda$ is a spectral set in $\widehat{G}$. In this case $(S,\La)$ is spectral pair as well as $(\La, S)$, and $|S|=|\La|$. For a spectral pair $(S,\La)$, \(\hat{1}_S(\chi)=0\) for all \(\chi \in \Lambda - \Lambda\), by changing the role of $S$ and $\La$, we also have and \(\hat{1}_{\La}(\chi)=0\) for all \(\chi \in S - S\). 

	\begin{lem}\label{lemeqdsubgrp}
		Let \(A\) be a multiset on a finite abelian group \(G\) and \(H \leq \widehat{G}\), then the following are equivalent
		\begin{itemize}
			\item \(A\) is equidistributed among cosets of \(H^\perp\), that is \(\sum_{g \in H^\perp + x} A(g)\) is constant.
			%\red{$H \le \widehat{G}$ so it is not in $G$.}
			\item \(\hat{A}(\chi) = 0\) for all \(\chi \neq 0 \in H\).
		\end{itemize}
	\end{lem}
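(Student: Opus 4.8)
The plan is to transfer both conditions to the quotient group $G/H^\perp$ and then invoke the elementary fact that a function on a finite abelian group is constant exactly when its Fourier transform is supported on the trivial character.

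First I would record the standard finite-group duality $(H^\perp)^\perp = H$, so that every $\chi \in H$ is trivial on $H^\perp$ and therefore factors through the projection $\pi \colon G \to G/H^\perp$ as a character $\bar\chi$ of $G/H^\perp$; this assignment $\chi \mapsto \bar\chi$ is an isomorphism $H \cong \widehat{G/H^\perp}$, and under it $\chi = 0$ corresponds to $\bar\chi = 0$ (if $\bar\chi$ were trivial then $\chi$ would be trivial on all of $G$). I would then introduce the pushforward multiset $B$ on $G/H^\perp$ defined by $B(x + H^\perp) = \sum_{g \in H^\perp + x} A(g)$, so that condition (1) is literally the assertion that $B$ is a constant function.

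For the implication (1) $\Rightarrow$ (2), fix $\chi \neq 0$ in $H$ and group the defining sum $\hat{A}(\chi) = |G|^{-1}\sum_{g\in G} A(g)\overline{\chi(g)}$ according to cosets of $H^\perp$; since $\chi$ is constant on each coset and equals $\bar\chi$ there, this rewrites as $\hat{A}(\chi) = |G|^{-1} \sum_{\bar x \in G/H^\perp} \overline{\bar\chi(\bar x)}\, B(\bar x)$. If $B$ is constant, the right-hand side is a constant multiple of the character sum $\sum_{\bar x} \overline{\bar\chi(\bar x)}$, which vanishes because $\bar\chi$ is a nontrivial character of $G/H^\perp$. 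For (2) $\Rightarrow$ (1), the same regrouping, read on $G/H^\perp$, gives $\widehat{B}(\bar\chi) = |H^\perp|\,\hat{A}(\chi)$ for every $\chi \in H$; the hypothesis then forces $\widehat{B}$ to be supported only at the trivial character of $G/H^\perp$, so Fourier inversion on $G/H^\perp$ yields $B(\bar x) = \widehat{B}(0)$ for all $\bar x$, i.e. $B$ is constant.

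I do not expect a real obstacle here: the argument is bookkeeping with character sums. The one place that needs a touch of care is the identification $\widehat{G/H^\perp}\cong H$ together with the preservation of nontriviality under passage to the quotient, which rests on the equality $(H^\perp)^\perp = H$ — valid precisely because $G$ is finite. Once that is in place, both directions are immediate, and the normalization constants ($|G|^{-1}$ versus $|G/H^\perp|^{-1}$, accounted for by the factor $|H^\perp|$) are purely cosmetic.
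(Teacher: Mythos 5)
Your proof is correct. It differs from the paper's in implementation rather than in substance: the paper realizes the coset-sum function as the convolution \(A \ast 1_{H^\perp}\) on \(G\) and then applies the convolution theorem together with the fact that the Fourier transform of \(1_{H^\perp}\) is a multiple of \(1_H\), so that constancy of \(A \ast 1_{H^\perp}\) is equivalent to \(\hat{A}\cdot 1_H\) being a multiple of \(\delta_0\); you instead push \(A\) forward to the quotient \(G/H^\perp\), identify \(\widehat{G/H^\perp}\) with \(H\) via \((H^\perp)^\perp = H\), and verify the identity \(\widehat{B}(\bar\chi) = |H^\perp|\,\hat{A}(\chi)\) by regrouping the character sum over cosets. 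Both arguments rest on the same duality fact (a function on a finite abelian group is constant iff its Fourier transform is supported at the trivial character); yours is more self-contained in that it avoids the convolution theorem and the formula for \(\widehat{1}_{H^\perp}\), at the cost of spelling out the identification \(H \cong \widehat{G/H^\perp}\) and the preservation of nontriviality, which you handle correctly. The paper's version is a two-line argument once those standard facts are taken for granted; yours makes the bookkeeping explicit, including the normalization constants, and is equally valid.
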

	\begin{proof}
		The multiset \(A\) is equidistributed among cosets of \(H^\perp\) when the convolution \(A \ast 1_{H^\perp}\) is constant.  The inverse Fourier transform of the indicator of \(H\) is a multiple of the indicator of \(H^\perp\), so taking the Fourier transform of \(A \ast \hat{1}_{H^\perp}\) gives \(\hat{A} \cdot 1_{H}\) is a multiple of \(\delta_0\).
	\end{proof}
	
	\begin{cor}\label{corequigen}
		If \(S\) is a spectral set in \(G\) and \(S\) determines every direction in a subgroup \(H \leq G\), then \(|H| \mid |S|\).
	\end{cor}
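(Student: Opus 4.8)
The plan is to pass to the dual side and invoke Lemma~\ref{lemeqdsubgrp}. Let $\Lambda\subseteq\widehat G$ be a spectrum of $S$. Then $(\Lambda,S)$ is also a spectral pair and $|\Lambda|=|S|$; in particular $\widehat 1_\Lambda(\chi_v)=0$, equivalently $\chi_v(\Lambda)=0$, for every nonzero $v\in S-S$ (here we regard $v\in G=\widehat{\widehat G}$ as the character $\chi_v$ of $\widehat G$). Our target is to show that in fact $\chi_v(\Lambda)=0$ for \emph{every} nonzero $v\in H$; once this is known, Lemma~\ref{lemeqdsubgrp} — applied with ambient group $\widehat G$, multiset $1_\Lambda$, and subgroup $H\le\widehat{\widehat G}$ — tells us that $1_\Lambda$ is equidistributed among the cosets of $H^\perp$ in $\widehat G$. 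There are $[\widehat G:H^\perp]=|H|$ such cosets, each containing the same number of elements of $\Lambda$, so $|H|\mid|\Lambda|=|S|$.

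To make the jump from "$S-S$" to "$H$" I would first record the standard fact that whether $\chi(\Lambda)=0$ depends only on $\ker\chi$. Indeed, if $\chi,\chi'\in\widehat G$ have the same kernel $K$, then $\widehat G/K$ is cyclic of some order $n$, both characters factor through the injection $\widehat G/K\hookrightarrow\mu_n$, and hence $\chi'=\chi^k$ for some $k$ coprime to $n$. Applying the Galois automorphism $\sigma_k\in\mathrm{Gal}(\Q(\zeta_n)/\Q)$, $\zeta_n\mapsto\zeta_n^k$, term by term to $\chi(\Lambda)=\sum_{\psi\in\Lambda}\chi(\psi)$ gives $\chi'(\Lambda)=\sigma_k(\chi(\Lambda))$, so one vanishes if and only if the other does. (This is essentially the remark following the definition of $S_\alpha$: the vanishing of $\widehat 1_S(\alpha)$ depends only on $\ker\alpha$.) Equivalently, the vanishing of $\chi_v(\Lambda)$ depends only on the cyclic subgroup $\langle v\rangle$.

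Now take any nonzero $v\in H$. By hypothesis $S$ determines the direction $[v]$, so there is $w\in S-S$ with $w\sim v$, i.e.\ $\langle w\rangle=\langle v\rangle$; since $v\ne 0$ also $w\ne 0$, hence $\chi_w(\Lambda)=0$ by the spectral-pair relation, and therefore $\chi_v(\Lambda)=0$ by the previous paragraph. This is exactly the hypothesis needed for Lemma~\ref{lemeqdsubgrp}, and the conclusion $|H|\mid|S|$ follows as described above.

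I do not expect a real obstacle; the only subtle point is the kernel-invariance of the vanishing of $\chi(\Lambda)$, which is what allows the purely "directional" hypothesis — namely that $S-S$ contains \emph{some} generator of each cyclic subgroup of $H$, not necessarily $v$ itself — to be converted into the honest Fourier-vanishing statement at $v$ that Lemma~\ref{lemeqdsubgrp} requires. Everything else is bookkeeping with duality and the index computation $[\widehat G:H^\perp]=|H|$.
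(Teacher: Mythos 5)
Your proof is correct and is essentially the paper's own argument: pass to the spectrum $\Lambda$, use that $\hat{1}_\Lambda$ vanishes on the nonzero elements of $S-S$ and hence (by direction-determination) on all nonzero elements of $H$, and then apply Lemma~\ref{lemeqdsubgrp} to get equidistribution of $\Lambda$ over the $|H|$ cosets of $H^\perp$, whence $|H|\mid|\Lambda|=|S|$. The only difference is that you spell out the kernel-invariance (Galois) step letting you pass from some generator $w\in S-S$ of $\langle v\rangle$ to $v$ itself, which the paper leaves implicit here and records separately as Lemma~\ref{lemcharacterequiv}.
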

	\begin{proof}
		If \(S\) is spectral, it must have a spectrum \(\Lambda\) where \(|\Lambda| = |S|\) and for any nonzero \(g \in S - S\), \(\hat{1}_\Lambda(g) = 0\).  Since \(S\) determines every direction of \(H\), \(\hat{1}_\Lambda\) vanishes on all nonzero elements of \(H\).
		From Lemma \ref{lemeqdsubgrp}, the means \(\La\) must be equidistriubuted among cosets of \(H^\perp\), of which there are \(|H|\) many of them, and so \(|H| \mid |\La| = |S|\).
	\end{proof}
	The previous statement will mostly be used in the paper in the following special form. 
	\begin{lem}\label{lemp2mid}
		Assume $A$ is a multiset on $\Z_r^2$, where $r$ is a prime with $\chi(A)=0$ for every $\chi \in \widehat{\Z_r^2}\setminus \{ \hat{0} \} $. Then $r^2 \mid |A|$. 
	\end{lem}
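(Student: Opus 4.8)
The plan is to read this off directly from Lemma~\ref{lemeqdsubgrp} applied to the full dual group. First I would observe that the hypothesis ``$\chi(A)=0$ for every nontrivial $\chi$'' is literally the statement ``$\hat A(\chi)=0$ for every $\chi\neq 0$'': by linearity of the Fourier transform the multiset identity $\chi(A)=|G|\,\hat A(\chi)$ holds, exactly as the set identity $\chi(S)=|G|\hat 1_S(\chi)$ recorded in the preliminaries. Now take $G=\Z_r^2$ and $H=\widehat{\Z_r^2}$, so that $H^\perp=\{0\}\le G$. Lemma~\ref{lemeqdsubgrp} then says that $\sum_{g\in\{0\}+x}A(g)=A(x)$ is constant in $x$, i.e.\ $A$ is a constant multiset; its constant value must be $|A|/|G|=|A|/r^2$, and since $A$ takes values in $\N$ this forces $r^2\mid |A|$.

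Equivalently, and just as quickly, one can bypass the lemma with a one-line orthogonality computation. Summing over all characters,
\[
|A| \;=\; \sum_{\chi\in\widehat{\Z_r^2}}\chi(A) \;=\; \sum_{\chi}\sum_{g}A(g)\chi(g) \;=\; \sum_{g}A(g)\sum_{\chi}\chi(g) \;=\; r^2\,A(0),
\]
where the first equality uses the hypothesis (only the trivial character contributes, and $\chi_0(A)=|A|$) and the last uses $\sum_{\chi}\chi(g)=|G|$ when $g=0$ and $0$ otherwise. Since $A(0)\in\N$, this gives $r^2\mid|A|$.

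There is no real obstacle in this lemma; the one point to be careful about is that $A$ is a genuine multiset, so $A(0)$ (equivalently, the common value $|A|/r^2$) is a non-negative integer, which is precisely what upgrades the identity $|A|=r^2A(0)$ to the asserted divisibility. I expect this statement to serve as a recurring tool later on: whenever the object dual to a set (its spectrum, or its tiling complement) is seen to vanish on all nonzero characters of a rank-two elementary abelian subgroup $\Z_r^2$, its cardinality is divisible by $r^2$.
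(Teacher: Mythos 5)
Your proposal is correct and follows the paper's own route: the paper states this lemma precisely as the special form of Lemma~\ref{lemeqdsubgrp} (take $H=\widehat{\Z_r^2}$, so $H^\perp=\{0\}$ and $A$ is constant with value $|A|/r^2\in\N$), which is exactly your first paragraph. Your second, direct orthogonality computation is just the elementary argument underlying that lemma, so there is nothing to add.
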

	The property introduced in the next corollary is usually called {\it equidistributivity property} of a spectral set $S$ in direction $\alpha$. 
	\begin{cor}\label{cor:equi}
		Let $S$ be a spectral set. If $\alpha\in \La-\La$ for some $\alpha\in \widehat{\Z_r}\setminus \{0 \}$, where $r$ is a prime, then $r\mid|S|=|\La|$. Moreover it follows from the previous argument that $S$ is equidistributed among the cosets of $\alpha^{\perp}$, see also \cite{KS2021}. 
	\end{cor}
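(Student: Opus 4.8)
The plan is to reduce the statement to the behaviour of the single projection $S_\alpha$ and then quote Lemma~\ref{lemeqdsubgrp}. Since $S$ is spectral with spectrum $\La$, the pair $(\La,S)$ is also spectral, so $\alpha\in\La-\La$ forces $\hat 1_S(\alpha)=0$, i.e. $\alpha(S)=0$. Because $\alpha$ has prime order $r$, the subgroup $H\coloneqq\langle\alpha\rangle\le\widehat G$ is isomorphic to $\Z_r$, and $G/\alpha^\perp\cong\Z_r$ has exactly $r$ cosets of $\alpha^\perp=H^\perp$; the multiset $S_\alpha$ lives on this quotient and its Fourier coefficient at $\alpha$ vanishes.

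The one substantive point is to upgrade $\alpha(S)=0$ to $\beta(S)=0$ for every nonzero $\beta\in H$. Here primality of $r$ is used: every such $\beta$ equals $k\alpha$ with $\gcd(k,r)=1$, and applying the Galois automorphism of $\Q(\zeta)/\Q$ (with $\zeta$ a primitive $r$-th root of unity) that sends $\zeta\mapsto\zeta^{k}$ to the algebraic integer $\alpha(S)=0$ gives $(k\alpha)(S)=0$; equivalently, a nonnegative integer combination of $r$-th roots of unity that equals $0$ must be a multiple of $1+\zeta+\dots+\zeta^{r-1}$, so $S_\alpha$ is constant on $\Z_r$. This is exactly the cyclotomic argument already used inside the proof of Corollary~\ref{corequigen}, now applied with $\La$ in the role of the spectral set and $H$ in the role of the subgroup. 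Once $\hat 1_S$ vanishes on all nonzero elements of $H$, Lemma~\ref{lemeqdsubgrp} applied to the multiset $1_S$ shows that $S$ is equidistributed among the $r$ cosets of $H^\perp=\alpha^\perp$; in particular $r\mid|S|$, and $|S|=|\La|$ since $(S,\La)$ is a spectral pair.

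I do not expect a genuine obstacle: the corollary merely repackages Lemma~\ref{lemeqdsubgrp} and the Galois/cyclotomic observation behind Corollary~\ref{corequigen}, specialised to a subgroup of prime order. The only care needed is to use the spectral-pair symmetry correctly, so that the hypothesis $\alpha\in\La-\La$ (a statement about differences in the spectrum) is converted into the vanishing of a Fourier coefficient of $1_S$; everything else is immediate. Equivalently, one can deduce the corollary directly from Corollary~\ref{corequigen} applied to the spectral set $\La\subseteq\widehat G$ together with the subgroup $H=\langle\alpha\rangle$, whose unique direction $[\alpha]$ is determined by $\La$.
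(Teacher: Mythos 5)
Your proposal is correct and follows essentially the route the paper intends (the paper gives no separate proof, saying only that the corollary ``follows from the previous argument''): spectral-pair symmetry converts $\alpha\in\La-\La$ into $\hat 1_S(\alpha)=0$, primality of $r$ (via the Galois/cyclotomic observation, i.e.\ Lemma~\ref{lemcharacterequiv}) upgrades this to vanishing on all nonzero elements of $\langle\alpha\rangle$, and Lemma~\ref{lemeqdsubgrp} (equivalently, Corollary~\ref{corequigen} applied to the spectral set $\La$ with $H=\langle\alpha\rangle$) yields the equidistribution of $S$ among the $r$ cosets of $\alpha^{\perp}$ and hence $r\mid|S|=|\La|$. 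No gaps.
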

	The following Lemma is folklore. One can find proper proof of the statement in \cite{La} and a generalization of this result is found in \cite{KMSV2020}. 
	\begin{lem}\label{lemcuberule1}
		Let $A$ be a multiset on $\Z_{pq}$. Assume $\chi(A)=0$ for some character of order $pq$. Then $A$ is the nonnegative sum of $\Z_p$ cosets and $\Z_q$ cosets. Moreover the converse is also true. 
	\end{lem}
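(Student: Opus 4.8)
Here is the plan I would follow.

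My plan is to give a self-contained argument rather than invoking the general structure theory of vanishing sums of roots of unity. The converse direction is the easy one and I would dispatch it first: if $\chi$ has order $pq$ and $H\le\Z_{pq}$ is the unique subgroup of order $p$, then $\chi$ restricts to a nontrivial character of $H$ (its kernel is trivial, since $|\ker\chi|=pq/\mathrm{ord}(\chi)=1$), so $\sum_{h\in H+x}\chi(h)=\chi(x)\sum_{h\in H}\chi(h)=0$ for every $x\in\Z_{pq}$; the same holds for the subgroup of order $q$. Hence $\chi$ annihilates each coset summand, and therefore $\chi(A)=0$ whenever $A$ is a nonnegative combination of such cosets.

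For the forward direction I would pass through the Chinese Remainder Theorem and identify $\Z_{pq}$ with $\Z_p\times\Z_q$. Since the subgroups of order $p$ and $q$ are characteristic, the desired conclusion is invariant under $\mathrm{Aut}(\Z_{pq})$, so after composing $A$ with a suitable automorphism I may assume the given order-$pq$ character has the form $\chi(i,j)=\omega_p^{i}\omega_q^{j}$ for fixed primitive $p$-th and $q$-th roots of unity $\omega_p,\omega_q$. Encoding the multiset as a $p\times q$ matrix $M$ with nonnegative integer entries $M_{i,j}$, the hypothesis $\chi(A)=0$ reads $\sum_{i,j}M_{i,j}\,\omega_p^{i}\omega_q^{j}=0$. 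Putting $r_i=\sum_j M_{i,j}\omega_q^{j}\in\Z[\omega_q]$, this becomes $\sum_{i=0}^{p-1}r_i\omega_p^{i}=0$, and since $\gcd(p,q)=1$ the minimal polynomial of $\omega_p$ over $\Q(\omega_q)$ is $1+X+\dots+X^{p-1}$; a polynomial of degree at most $p-1$ over $\Q(\omega_q)$ that vanishes at $\omega_p$ is a scalar multiple of it, which forces $r_0=r_1=\dots=r_{p-1}$. Subtracting the $i=0$ equation and running the same argument for $\omega_q$ over $\Q$ (whose minimal polynomial is $1+Y+\dots+Y^{q-1}$) shows that $M_{i,j}-M_{0,j}$ does not depend on $j$; writing $c_j=M_{0,j}$ and letting $d_i$ be this common difference, we obtain $M_{i,j}=c_j+d_i$ with $c_j,d_i\in\Z$ and $d_0=0$.

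The step I expect to be the only genuine subtlety is converting this (possibly signed) decomposition into a nonnegative one, since nothing so far prevents the $d_i$ from being negative. The fix is to feed the nonnegativity of $M$ back in: with $m=\min_j c_j$, choose $j^\ast$ with $c_{j^\ast}=m$; then $m+d_i=M_{i,j^\ast}\ge 0$, so $b_i:=d_i+m\ge 0$, while $a_j:=c_j-m\ge 0$ by the choice of $m$. Since $M_{i,j}=a_j+b_i$, this exhibits $A$ as $\sum_j a_j$ translates of the coset $\Z_p\times\{j\}$ together with $\sum_i b_i$ translates of the coset $\{i\}\times\Z_q$, i.e.\ as a nonnegative integer combination of cosets of the subgroup of order $p$ and of the subgroup of order $q$, which is exactly the assertion. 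The only external input used is the standard fact that $\gcd(p,q)=1$ keeps $\Phi_p$ irreducible over $\Q(\omega_q)$ (and $\Phi_q$ irreducible over $\Q$).
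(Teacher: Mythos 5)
Your argument is correct, but it is worth noting how it sits relative to the paper: the paper does not prove Lemma~\ref{lemcuberule1} at all, labelling it folklore and pointing to Lam--Leung \cite{La} and the generalization in \cite{KMSV2020}, so any comparison is with that citation route rather than with an in-house argument. What you do instead is reprove the $\Z_{pq}$ case from scratch: CRT plus an automorphism reduction to the standard character $\chi(i,j)=\omega_p^i\omega_q^j$ (legitimate, since the order-$p$ and order-$q$ subgroups are characteristic), then the observation that $\sum_i r_i\omega_p^i=0$ with $r_i\in\Z[\omega_q]$ forces all $r_i$ equal because $\Phi_p$ stays irreducible over $\Q(\omega_q)$ when $\gcd(p,q)=1$, and then the same argument over $\Q$ to get $M_{i,j}=c_j+d_i$. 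Your handling of the one genuine subtlety --- converting the signed decomposition into a nonnegative one by shifting with $m=\min_j c_j$ and checking $b_i=d_i+m=M_{i,j^\ast}\ge 0$, $a_j=c_j-m\ge 0$ --- is exactly right, and this is the step that careless write-ups of the ``folklore'' fact tend to gloss over. The trade-off is the expected one: your route is elementary and self-contained, needing only the irreducibility of $\Phi_p$ over $\Q(\omega_q)$, whereas the cited Lam--Leung/\cite{KMSV2020} machinery is what one needs if the lemma is to be used beyond the square-free two-prime setting (e.g.\ for $\Z_{p^aq^b}$); for the purposes of this paper, where only the $\Z_{pq}$ quotients arise, your proof would serve as a complete substitute for the citation.
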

	
	\begin{lem}\label{lemcharacterequiv}
		Let $G$ be a finite abelian group. Let us assume $\chi_v(S)=0$ for some irreducible representation $\chi_v$ of $G$. Then for $\chi_{vk}(S)=0$ if $gcd(k,|G|)=1$. 
	\end{lem}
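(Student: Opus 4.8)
The plan is to deduce the statement from a Galois-conjugacy argument in a cyclotomic field. Write $N=|G|$ and $\zeta=e^{2\pi i/N}$. Every character of $G$ has order dividing $N$, so each value $\chi_w(g)$ is an $N$-th root of unity; consequently both $\chi_v(S)=\sum_{s\in S}\chi_v(s)$ and $\chi_{vk}(S)=\sum_{s\in S}\chi_{vk}(s)$ lie in the ring $\Z[\zeta]\subseteq\Q(\zeta)$. Since $\gcd(k,N)=1$, the assignment $\zeta\mapsto\zeta^k$ extends to a field automorphism $\sigma_k$ of $\Q(\zeta)$ fixing $\Q$ pointwise.

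First I would record the identity $\chi_{vk}=(\chi_v)^k$, which is immediate from the fact that $w\mapsto\chi_w$ is a group homomorphism $G\to\widehat{G}$; in particular $\chi_{vk}(s)=\chi_v(s)^k$ for every $s\in S$. Writing $\chi_v(s)=\zeta^{j(s)}$ for suitable exponents $j(s)$, one then has $\sigma_k\!\left(\chi_v(s)\right)=\zeta^{k\,j(s)}=\chi_v(s)^k=\chi_{vk}(s)$, and summing over $s\in S$ gives
\[
\sigma_k\!\left(\chi_v(S)\right)=\sum_{s\in S}\sigma_k\!\left(\chi_v(s)\right)=\sum_{s\in S}\chi_{vk}(s)=\chi_{vk}(S).
\]

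Finally I would combine these: applying $\sigma_k$ to the hypothesis $\chi_v(S)=0$ yields $\chi_{vk}(S)=\sigma_k(0)=0$, which is the claim. (Equivalently, $\chi_v(S)$ and $\chi_{vk}(S)$ are Galois conjugates over $\Q$, hence vanish simultaneously; spelled out without Galois theory: if $\chi_v(S)=P(\zeta)$ with $P\in\Z[X]$, then $P(\zeta)=0$ forces the $N$-th cyclotomic polynomial $\Phi_N$ to divide $P$, and since $\zeta^k$ is again a primitive $N$-th root of unity we get $\chi_{vk}(S)=P(\zeta^k)=0$.) I do not expect a genuine obstacle here; the only point deserving a line of care is checking that all the relevant roots of unity really lie in $\Q(\zeta_N)$ — that is, that every character order divides $N$ — so that the coprimality hypothesis $\gcd(k,|G|)=1$ is exactly what makes $\sigma_k$ well defined.
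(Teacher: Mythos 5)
Your proposal is correct and is essentially the paper's own proof: the paper simply applies a suitable element of $Gal(\Q(\xi_{|G|})\mid\Q)$ to the equation $\chi_v(S)=0$, which is exactly your automorphism $\sigma_k:\zeta\mapsto\zeta^k$ (well defined because $\gcd(k,|G|)=1$) together with the observation $\chi_{vk}=\chi_v^k$. Your write-up just spells out the details the paper leaves implicit.
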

	\begin{proof}
		Applying a suitable element of the Galois group $Gal(\Q(\xi_{|G|}) \mid \Q)$ to the equation $\chi_v(S)=0$, where $\xi_{|G|}$ is a $|G|$'th root of unity, gives the result.
	\end{proof}
	The condition $gcd(k,pq)=1$ in Lemma \ref{lemcharacterequiv} is equivalent to $v\sim vk$ and this motivates the introduction of directions in Definition \ref{defdir}.	
	\section{Spectral Implies Tiling in \(\Z_p^2 \times \Z_q^2\)}
	
	For this direction, we split into cases based on divisibility of the spectral set \(S\).
	
	\medskip
	\noindent
	\underline{{\bf Case 1:} \(p^2 q \dd |S|\) (or \(p q^2 \dd |S|\))} \\
	From a pigeon hole argument, such an \(S\) either tiles by a \(\Z_q\) coset and has size \(p^2q\), or \(S\) determines every direction of order \(q\).  But if \(S\) determines every direction of order \(q\), then by Lemma \ref{lemp2mid}, \(q^2 \mid |S|\), contradicting our original assumption that \(p^2 q \dd |S|\).
	
	\medskip
	\noindent
	\underline{{\bf Case 2:} \(p^2 \dd |S|\) (or \(q^2 \dd |S|\))} \\
	Similar to the above case, a pigeon hole argument shows that either \(S\) tiles by \(\Z_q^2\) and has size \(p^2\), otherwise \(S\) determines some direction of order \(q\), which implies that \(q \mid |S|\) by Corollary \ref{cor:equi}, contradicting that \(p^2 \dd |S|\).	
	
	\medskip
	\noindent
	\underline{{\bf Case 3:} \(1 \dd |S|\)} \\
	In the case that \(|S| = 1\), then \(S\) tiles trivially by the whole space.  Otherwise \(|S|=|\La| > 1\), and for any nonzero \(\chi \in \Lambda - \Lambda\), \(\chi(S) = 0\).  If the order of \(\chi\) is \(p\) or \(q\), then by Corollary \ref{cor:equi}, \(|S|\) would have to be a multiple of \(p\) or \(q\) respectively, contradiction our assumption that \(1 \dd |S|\). So for all such characters, the order must be \(pq\).
	For such a \(\chi\), when projecting \(S\) to \( \left( \Z_p^2 \times \Z_q^2 \right)/ \chi^\perp \cong \Z_p \times \Z_q\), \(S\) must project to a sum of \(\Z_p\) and \(\Z_q\) cosets by Lemma \ref{lemcuberule1}.  
	If this projection were only \(\Z_p\) or \(\Z_q\) cosets, then \(|S|\) would be a multiple of \(p\) or \(q\), so there must be at least one of each in this projection.  Without loss of generality, assume \(p < q\).  
	The preimage of any of those \(\Z_q\) cosets is in a coset in \(G\) of a subgroup isomorphic to \(\Z_p \times \Z_q^2\) that contains \(q\) points of \(S\). Since this contains only \(p\) cosets of \(\Z_q^2\), there must be some \(\Z_q^2\) coset with 2 points from \(S\).
	Thus \(S\) determines a direction of order \(q\) and is spectral, hence \(|S|\) must be a multiple of \(q\), and once again contradicts our original assumption.

	\medskip
	\noindent
	\underline{{\bf Case 4:} \(p \dd |S|\) (or $q \dd |S|$)} \\
	Let us first assume $|S|=p$. It is clear that $S$ vanishes only on characters of order $p$ or $pq$, otherwise $q\mid |S|$ by Corollary \ref{cor:equi}, which is a contradiction. 
	Let $\chi$ be a character such that $\chi(S)=0$. 
	If $o(\chi)=p$, then $\langle \chi \rangle$ is a spectrum for $S$. If $\chi$ is of order $pq$, then the projection of $S$ to $(\Z_p^2 \times \Z_q^2)/\chi^{\perp}$ is the sum of $\Z_p$ cosets and  $\Z_q$ cosets by Lemma \ref{lemcuberule1}. Since $|S|=p$, this projection is just a $\Z_p$ coset. Hence $\chi^{q}(S)=0$ and 
	$\langle \chi^{q} \rangle$ is a spectrum for $S$.
	
	From now on we assume $|S|>p$.
	The spectrum \(\Lambda\) must project to \(\Z_p^2\) as a set so as not to determine any direction of order \(q\).
	If \(|S| = |\Lambda| \neq p\), then the projection of $\La$ determines every direction of order \(p\), so for every $0 \ne u \in \Z_p^2$ there is $v_u \in \Z_q^2$ 
	such that \(\chi_{u+v_u}(S) = 0\) by Lemma \ref{lemcharacterequiv}.
	
	We argue that $v_u \ne 0$ for some $0\ne u \in \Z_p^2$ since $p^2 \nmid |S|$. In this case 
	there is some character \(\chi_{u+v_u}\) of order \(pq\) where \(\chi_{u+v_u}(S) = 0\). Then \(S\) projects by \(\chi_{u+v_u}^\perp\) to a sum of \(\Z_p\) and \(\Z_q\) cosets by Lemma \ref{lemcuberule1}. Clearly, $S/\chi_{u+v_u}^\perp$ cannot only be the sum of $\Z_p$ cosets since that would imply $\chi_{u}(S)=0$, which is excluded. Similarly, $S/\chi_{u+v_u}^\perp$ is not the sum of $\Z_q$ cosets only since $q \nmid |S|$. 
	
	Since \(|S|\) is a multiple of \(p\), so must be the number of \(\Z_q\) cosets that appear. Hence we obtain $|S|>pq$.
	
	\begin{lem}\label{lem:obs}
		If \(A \subseteq \Z_p^2 \times \Z_q^2\) with % is spectral and 
		\(|A|\geq pq\), then either \(A\) is a tile and $|A|=pq$, or for all \(0\ne a \in \Z_p^2, 0\ne b\in \Z_q^2\) either \([a,0] \in \D(A)\) or \([0,b] \in \D(A)\) or \([a,b] \in \D(A)\). 
	\end{lem}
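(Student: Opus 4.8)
The plan is to prove the contrapositive: assuming there exist $0\neq a\in\Z_p^2$ and $0\neq b\in\Z_q^2$ with $[a,0]\notin\D(A)$, $[0,b]\notin\D(A)$ and $[a,b]\notin\D(A)$, I will show that $A$ tiles and $|A|=pq$. The first step is to introduce the subgroup $H\coloneqq\langle a\rangle+\langle b\rangle\leq\Z_p^2\times\Z_q^2$. Since $p\neq q$, we have $H\cong\Z_p\times\Z_q$ and $|H|=pq$. The crucial elementary observation is that $[a,0]$, $[0,b]$, $[a,b]$ are exactly the directions determined by the nonzero elements of $H$: writing an arbitrary element of $H$ as $ia+jb$ with $i\in\Z_p$, $j\in\Z_q$ not both zero, the case $j=0$ gives a generator of $\langle a\rangle$ (direction $[a,0]$), the case $i=0$ gives a generator of $\langle b\rangle$ (direction $[0,b]$), and if $i,j\neq 0$ then $ia+jb$ has order $\lcm(p,q)=pq$, hence generates all of $H=\langle a\rangle+\langle b\rangle=\langle a+b\rangle$, giving direction $[a,b]$.

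Consequently the hypothesis says precisely that $A$ determines no direction lying inside $H$, i.e.\ $(A-A)\cap H=\{0\}$. Equivalently, distinct elements of $A$ lie in distinct cosets of $H$, so the quotient map $G\to G/H$ is injective on $A$ and $|A|\leq|G/H|=p^2q^2/(pq)=pq$. Together with the assumption $|A|\geq pq$ this forces $|A|=pq$, and then $A$ meets every coset of $H$ exactly once, so $A+H=G$ while $|A|\cdot|H|=|G|$. Hence $A\oplus H=G$: the set $A$ is a tile of size $pq$, with the subgroup $H$ as a tiling complement, which also matches the remark following Theorem~\ref{t1}.

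I do not expect a genuine obstacle here; the statement is really a counting lemma. The only point needing a little care is the verification in the first step that the three named directions cover all directions arising from $H\setminus\{0\}$ — this is where $\gcd(p,q)=1$ is used, since it guarantees that an element of $H$ with both coordinates nonzero has order $pq$ and therefore generates the whole of $H$, so no further directions appear. Once that is settled, the passage from ``few determined directions'' to ``injective projection onto $G/H$'' to ``exact tiling by a subgroup'' is immediate.
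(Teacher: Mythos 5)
Your proof is correct and is essentially the paper's argument in contrapositive form: both rest on the pigeonhole/counting observation for the index-$pq$ subgroup $H=\langle a\rangle+\langle b\rangle=\langle(a,b)\rangle$, whose nonzero elements determine exactly the directions $[a,0]$, $[0,b]$, $[a,b]$ (the paper argues directly that either $A$ hits each $H$-coset once, hence tiles, or two points of $A$ share a coset and determine one of these directions). Your explicit verification that no other directions arise from $H\setminus\{0\}$, using $\gcd(p,q)=1$, is a detail the paper leaves implicit but changes nothing in substance.
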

	\begin{proof}
		The subgroup $\langle (a,b) \rangle \cong \Z_p \times \Z_q$ is of index $pq$. 
		Either \(A\) has one point in each $\langle (a,b) \rangle$ coset and $A$ tiles, or there are two elements of \(A\) in one of those cosets.
		In that case, \(A\) must determine some direction of $\langle (a,b) \rangle$, which is either \([a,0] \in \D(A)\) or \([0,b] \in \D(A)\) or \([a,b] \in \D(A)\).
	\end{proof}
	
	By applying Lemma \ref{lem:obs} for $\La$ ($|\La|>pq$) we conclude that for every $0 \ne v \in \Z_q^2$ one of the followings holds: $\chi_{u+v}(S)=0$, $\chi_{u}(S)=0$ or $\chi_{v}(S)=0$. As 
	$\chi_{u}(S)=0$ and $\chi_{v}(S) = 0$ for every $0 \ne v \in \Z_p^2$ are excluded by our assumptions we obtain  $\chi_{u+v}(S)=0$ for every $0 \ne v \in \Z_p^2$.
	
	\begin{prp}\label{prop1}
		Let $T$ be a multiset $\Z_p \times \Z_q^2$ with $\chi_{x+y}(T)=0$ for every $0 \ne x \in \Z_p$ and $0 \ne y \in \Z_q^2$.
		For every $i \in \Z_p$ let $g_i\colon \Z_q^2\to\N $ be a function such that $g_i(z)=T(i+z)$ for all $z\in \Z_q^2$. Then $g_i-g_j$ is a constant function for every $i,j \in \Z_p$. 
	\end{prp}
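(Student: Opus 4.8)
The plan is to pass to the Fourier (character) side and exploit the product decomposition $\Z_p\times\Z_q^2$, carrying out the character sum one factor at a time. The only tool needed is the elementary fact underlying Lemma~\ref{lemeqdsubgrp}: a $\C$-valued function on a finite abelian group whose Fourier transform is supported on the trivial character is constant (equivalently, by orthogonality of characters, a function orthogonal to all nontrivial characters is constant). Note that this statement, unlike the corollaries that follow Lemma~\ref{lemeqdsubgrp}, does not require positivity.

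First I would separate variables in the hypothesis. Writing an element of $\Z_p\times\Z_q^2$ as $a+b$ with $a\in\Z_p$ and $b\in\Z_q^2$, the characters factor as $\chi_{x+y}(a+b)=\chi_x(a)\,\chi_y(b)$, so
\[
\chi_{x+y}(T)=\sum_{a\in\Z_p}\sum_{b\in\Z_q^2}T(a+b)\,\chi_x(a)\chi_y(b)=\sum_{a\in\Z_p}\chi_x(a)\,\chi_y(g_a),
\]
where $\chi_y(g_a):=\sum_{b\in\Z_q^2}g_a(b)\chi_y(b)$. Hence for each fixed $0\ne y\in\Z_q^2$, the hypothesis says that the function $c\colon\Z_p\to\C$ given by $c(a):=\chi_y(g_a)$ satisfies $\sum_{a\in\Z_p}\chi_x(a)c(a)=0$ for every $0\ne x\in\Z_p$, i.e.\ all nontrivial Fourier coefficients of $c$ on $\Z_p$ vanish. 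By the fact recalled above, $c$ is constant; that is, $\chi_y(g_i)=\chi_y(g_j)$ for all $i,j\in\Z_p$ and all $0\ne y\in\Z_q^2$.

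Next I would feed this back into the $\Z_q^2$ variable. Fix $i,j\in\Z_p$ and set $h:=g_i-g_j\colon\Z_q^2\to\Z$. The previous paragraph gives $\chi_y(h)=\chi_y(g_i)-\chi_y(g_j)=0$ for every $0\ne y\in\Z_q^2$, so all nontrivial Fourier coefficients of $h$ on $\Z_q^2$ vanish, and applying the same fact once more (now on $\Z_q^2$) shows that $h$ is constant. This is precisely the assertion that $g_i-g_j$ is a constant function.

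I do not anticipate a real obstacle: the argument is Fourier inversion applied twice, once along the $\Z_p$ factor and once along the $\Z_q^2$ factor. The points that need a little care are purely formal: confirming that, under the paper's normalization, the vanishing of $\chi_{x+y}(T)$ for all $x\ne0$, $y\ne0$ is exactly the displayed double sum; observing that we never use the characters with $x=0$ or $y=0$, since constancy only requires the \emph{nontrivial} Fourier coefficients to vanish; and noting that the functions $c$ and $h=g_i-g_j$ to which we apply the computation from Lemma~\ref{lemeqdsubgrp} need not be nonnegative (indeed $c$ is complex-valued), which is harmless because that computation is linear.
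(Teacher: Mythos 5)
Your proof is correct, but it takes a genuinely different route from the paper's. The paper fixes $0\ne u\in\Z_p$ and $0\ne v\in\Z_q^2$, projects $T$ onto $\langle u,v\rangle\cong\Z_p\times\Z_q$, and invokes Lemma \ref{lemcuberule1}: since $\chi_{u+v}(T)=0$, the projection is a nonnegative sum of $\Z_p$- and $\Z_q$-cosets, so the fibre functions $f_{i,v}-f_{j,v}$ are constant and hence $\hat{g}_i(v)=\hat{g}_j(v)$ via Lemma \ref{lemeqdsubgrp}. You bypass the root-of-unity structure theory entirely: separating variables in $\chi_{x+y}(T)$, you observe that for fixed $0\ne y$ the function $a\mapsto\chi_y(g_a)$ on $\Z_p$ has all nontrivial Fourier coefficients equal to zero and is therefore constant, and then a second application of the same inversion argument on $\Z_q^2$ shows $g_i-g_j$ is constant. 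Your argument is more elementary (only orthogonality and Fourier inversion, once per factor) and strictly more general: it works verbatim with $\Z_p$ and $\Z_q^2$ replaced by arbitrary finite abelian groups and with $T$ any complex-valued function, whereas the paper's route needs $p\ne q$ and the Lam--Leung description of vanishing sums of $pq$-th roots of unity, but has the virtue of staying within the coset-decomposition machinery (Lemmata \ref{lemcuberule1} and \ref{lemeqdsubgrp}) used throughout the rest of the paper. Your closing remark is also the right precaution: Lemma \ref{lemeqdsubgrp} is stated for multisets, so one should note, as you do, that the underlying computation is linear and applies to the complex-valued function $a\mapsto\chi_y(g_a)$ and to the integer-valued difference $g_i-g_j$.
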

	\begin{proof}
		We prove that $supp(\widehat{g_i-g_j})=supp(\hat{g_i}-\hat{g_j})=\{0\}$. Let $0 \ne v \in \Z_q^2$.   
		In order to calculate $\hat{g_i}(v)$ we first project $T$ to $\langle u,v\rangle$, where $0 \ne u \in \Z_p$. It follows from $\chi_{u+v}(T)=0$ that $T_{\langle u,v \rangle}$ is the sum of $\Z_p$ cosets and $\Z_q$ cosets. For every $i \in \Z_p$, $w \in \Z_q$ let $f_{i,v}(w)=T_{\langle u,v \rangle}(i+w)$. Since $T_{\langle u,v \rangle}$ is the sum of $\Z_p$ cosets and $\Z_q$ cosets, $f_{i,v}-f_{j,v}$ is constant for $i,j \in \Z_p$. 
		
		It is straightforward to verify that $\hat{g_i}(v)=\hat{f}_{i,v}(w)$ for some $w \ne 0$ and then $\hat{g_i}(v)-\hat{g_i}(v)=\hat{f}_{i,v}(w)-\hat{f}_{j,v}(w)=0$ by Lemma \ref{lemeqdsubgrp}, since $f_{i,v}-f_{j,v}$ is constant. 
	\end{proof}
	It is clear from the previous argument that for the projection $T$ of $S$ to $\langle u, \Z_q^2 \rangle$ the conditions of Proposition \ref{prop1} hold. Thus we obtain that for $g_i(z)=T(i+z)$ ($z\in \Z_q^2$), $g_i-g_j$ is a constant function for every $i,j \in \Z_p$. If $g_i=g_j$ for every $i,j \in \Z_p$, then $\chi_u(S)=0$, a contradiction.
	
	Let $g_0$ denote the function which is minimal among $g_i$'s. If $g_0(z)=0$ for all $z\in \Z_q^2$, then $q^2 \mid |S|$, which contradicts the fact that $p \mid\mid |S|$. If $g_0(z)>0$ for some $z\in \Z_q^2$, then $g_i(z)>0$ for every $i \in \Z_p$ and we have seen that $g_j(z)>1$ for some $j \ne 0$. Thus $z+\Z_p^2$ contains at least $p+1$ elements of $S$, implying that $|S|$ determines every direction of $\Z_p^2$ and hence $p^2\mid |\La|=|S|$ by Corollary \ref{cor:equi}, a contradiction.

	\subsection{Case 5: $pq \dd |S|$}
	The case when $pq \dd |S|$ is treated separately simply because the argument needed here is more complicated. 

	\begin{prp}
		If \(S \subseteq \Z_p^2 \times \Z_q^2\) is spectral and \(|S| = pq\), then \(S\) is a tiling set.
	\end{prp}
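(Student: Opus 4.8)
I would prove this by exhibiting an order-$pq$ subgroup as a tiling complement. Every subgroup of $\Z_p^2\times\Z_q^2$ of order $pq$ has the form $H=R+Q$ with $R\le\Z_p^2$ and $Q\le\Z_q^2$ one-dimensional, and $H$ has exactly three directions: $[a,0]$ (with $0\ne a\in R$), $[0,b]$ (with $0\ne b\in Q$) and the ``mixed'' direction $[a,b]$. If $S$ determines none of these three, then no two points of $S$ lie in one coset of $H$, so $S$ meets each of the $|G|/|H|=pq$ cosets of $H$ at most once, hence exactly once, so $S$ tiles $G$ with complement $H$. Thus it suffices to produce such a pair $R,Q$.

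Fix a spectrum $\La$ of $S$; then $|\La|=pq$ and $\hat 1_\La$ vanishes on $(S-S)\setminus\{0\}$. By Corollary \ref{corequigen}, $S$ cannot determine all $p+1$ directions inside $\Z_p^2$, nor all $q+1$ directions inside $\Z_q^2$ (otherwise $p^2\mid pq$ or $q^2\mid pq$); let $\mathcal P_0\ne\emptyset$ be the set of lines $R\le\Z_p^2$ with $[a,0]\notin\D(S)$, and $\mathcal Q_0\ne\emptyset$ the set of lines $Q\le\Z_q^2$ with $[0,b]\notin\D(S)$. Assume toward a contradiction that for every $R\in\mathcal P_0$ and $Q\in\mathcal Q_0$ the mixed direction of $R+Q$ lies in $\D(S)$. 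The central step transfers this to $\La$: if $S$ determines the mixed direction of $R+Q$, choose a generator $(a,b)\in S-S$ of it; then $\chi_{(a,b)}(\La)=0$ and $\chi_{(a,b)}$ has order $pq$, so by projecting $\La$ onto $G/\ker\chi_{(a,b)}\cong\Z_{pq}$ and applying Lemma \ref{lemcuberule1}, the image multiset is a nonnegative sum of $\Z_p$-cosets and $\Z_q$-cosets of $\Z_{pq}$. A mass count ($mp+nq=pq$) forces this sum to be ``pure'', so the image is constant along the order-$p$ subgroup of $\Z_{pq}$ or along its order-$q$ subgroup; reading off the Fourier support (and using Lemma \ref{lemcharacterequiv}) then shows that $\chi_{(a',0)}(\La)=0$ for every $0\ne a'\in R$ in the first case, and $\chi_{(0,b')}(\La)=0$ for every $0\ne b'\in Q$ in the second.

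A pigeonhole over $\mathcal P_0\times\mathcal Q_0$ now forces one alternative to hold uniformly: say $\chi_{(a',0)}(\La)=0$ for every generator $a'$ of every $R\in\mathcal P_0$ (the other case is symmetric, with $q$ in place of $p$). For a line $R\notin\mathcal P_0$ we have $[a,0]\in\D(S)$, so a generator of $R$ lies in $S-S$ and Lemma \ref{lemcharacterequiv} again yields $\chi_{(a',0)}(\La)=0$ for all $0\ne a'\in R$. Hence $\chi(\La)=0$ for every nonzero character of the Sylow $p$-part, and projecting $\La$ to that part and applying Lemma \ref{lemp2mid} gives $p^2\mid|\La|=pq$, a contradiction. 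Thus the assumed configuration cannot occur, there exist $R\in\mathcal P_0$ and $Q\in\mathcal Q_0$ whose mixed direction is not in $\D(S)$, and $S$ tiles $G$ with complement $R+Q$.

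I expect the cube-rule step to be the real obstacle: one must verify carefully that $\chi_{(a,b)}(\La)=0$ together with $|\La|=pq$ genuinely forces the image of $\La$ in $\Z_{pq}$ to be invariant under the order-$p$ (or order-$q$) subgroup — excluding any ``genuinely mixed'' combination of cosets — and then translate that invariance into the vanishing of $\hat 1_\La$ on a full line of the relevant Sylow component. The rest (the reduction to a subgroup complement, the pigeonhole, and the final divisibility contradiction) consists of short applications of lemmas already proved in the paper.
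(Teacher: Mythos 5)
Your proof is correct and takes essentially the same approach as the paper's: both arguments reduce to showing $S$ avoids all three directions of some subgroup isomorphic to $\Z_p\times\Z_q$ (the counting observation of Lemma \ref{lem:obs}), using Lemma \ref{lemcuberule1} together with the mass count $kp+lq=pq$ to force the projection of $\Lambda$ to be purely $\Z_p$- or purely $\Z_q$-cosets, and then Lemma \ref{lemp2mid}/Corollary \ref{corequigen} to reach the divisibility contradiction $p^2\mid pq$ or $q^2\mid pq$. The only difference is bookkeeping: the paper fixes a single $a$ with $\hat{1}_\Lambda(a,0)\neq 0$ and deduces $\hat{1}_\Lambda(0,b)=0$ for all $b$, whereas you run a symmetric pigeonhole over the undetermined lines in both Sylow components.
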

	\begin{proof}
		
		Since $p^2 \nmid |S|=|\La|$, there is an $a$ of order $p$ with $\hat{1}_{\La}(a,0) \ne 0$.
		
		By Lemma \ref{lem:obs}, either \(S\) is a tile or \(S\) determines at least one of \([a,0]\), \([0,b]\) and \([a,b]\) for every \(0 \ne b \in \Z_q^2\). Note that $\hat{1}_{\La}(a,0) \ne 0$ implies \([a,0]\not\in \mathcal{D}(S)\).
		Thus \([0,b]\in \mathcal{D}(S)\) or \([a,b]\in \mathcal{D}(S)\). 
		
		If \([a,b] \in \D(S)\), then for the spectrum \(\Lambda\) of \(S\), \(\hat{1}_\Lambda(a,b) = 0\).
		By Lemma \ref{lemcuberule1}, $\La$ is the sum of $\Z_p$ cosets and $\Z_q$ cosets so $|\La|=kp+lq$ for some $k,l \in \N$.
		Since \(|\Lambda| = pq\) then either $k=q$ and $l=0$ or $k=0$ and $l=p$, implying  \(\hat{1}_\Lambda(a,0) = 0\) or \(\hat{1}_\Lambda(0,b) = 0\).  
		
		As \(\hat{1}_\Lambda(a,0)\neq 0\) we have \(\hat{1}_\Lambda(0,b)=0\) for all \(0\ne b\in \Z_q^2\), implying that \(q^2 \mid |\Lambda|\), a contradiction.
		So \(\hat{1}_\Lambda\) must vanish on \((a,0)\), but \(a\) was arbitrary, implying \(\hat{1}_\Lambda\) vanishes on every element of order \(p\), another contradiction.
		Therefore \(S\) must tile by a \(\Z_p \times \Z_q\) subgroup.
	\end{proof}
	\begin{lem}
		If \(pq \dd |S|\) and \(|S| \ge pq \min\{p,q\}\), then $S$ is not spectral.
	\end{lem}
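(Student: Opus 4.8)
The statement is symmetric in $p$ and $q$, so I would assume without loss of generality that $p<q$, so the hypothesis reads $|S|\ge p^2q$, and argue by contradiction, supposing $S$ is spectral. The first step is pure bookkeeping with the $\dd$-notation. By definition $pq\dd|S|$ means $\gcd(|S|,|G|)=\gcd(|S|,p^2q^2)=pq$; in particular $q^2\nmid|S|$ and $|S|\ne p^2q$, since either would force the $\gcd$ to be a proper multiple of $pq$. Combined with $|S|\ge p^2q$ this yields the strict inequality $|S|>p^2q$, which is the only place the size hypothesis is used.

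The core of the argument is a pigeonhole on the fibres of $S$ over the first coordinate. For $x\in\Z_p^2$ set $S^x\coloneqq\{y\in\Z_q^2:(x,y)\in S\}$; there are $p^2$ such fibres and $\sum_{x\in\Z_p^2}|S^x|=|S|>p^2q$, so some fibre $S^{x_0}\subseteq\Z_q^2$ has $|S^{x_0}|\ge q+1$. I then invoke the elementary observation that a subset $U\subseteq\Z_q^2$ with $|U|\ge q+1$ determines every one of the $q+1$ directions of $\Z_q^2$: if $U$ failed to determine the direction $\langle b\rangle$ for some $0\ne b\in\Z_q^2$, then the quotient map $\Z_q^2\to\Z_q^2/\langle b\rangle$ onto a group of order $q$ would be injective on $U$, contradicting $|U|\ge q+1$. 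Hence $S^{x_0}$ determines every direction of $\Z_q^2$.

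It then remains to transport this to $S$ itself. For each $0\ne b\in\Z_q^2$ choose $y\ne y'$ in $S^{x_0}$ with $\langle y-y'\rangle=\langle b\rangle$; then $(0,y-y')=(x_0,y)-(x_0,y')\in S-S$ with $(0,y-y')\sim(0,b)$, so $[0,b]\in\D(S)$. Thus $S$ determines every direction in the subgroup $\{0\}\times\Z_q^2\cong\Z_q^2$, and Corollary~\ref{corequigen} (equivalently, Lemma~\ref{lemp2mid} applied to the image of a spectrum of $S$ in $\Z_q^2$) forces $q^2\mid|S|$, contradicting the first paragraph.

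I do not expect a genuine obstacle: the proof is short and uses only Corollary~\ref{corequigen} together with the trivial direction‑counting fact above. The only things requiring care are the translation of $pq\dd|S|$ into ``$q^2\nmid|S|$ and $|S|>p^2q$'' and the legitimacy of reducing to $p<q$ (which is fine, as $\Z_p^2\times\Z_q^2$ and $pq\min\{p,q\}$ are symmetric in $p$ and $q$). It is worth noting why the hypothesis cannot be relaxed by this method: once $pq<|S|<p^2q$ the pigeonhole no longer guarantees a fibre of size $q+1$, and that intermediate regime is exactly what the remaining lemmas of the section must treat by a finer analysis.
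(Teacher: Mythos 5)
Your proof is correct and follows essentially the same route as the paper: use $pq\dd|S|$ together with $|S|\ge pq\min\{p,q\}$ to get $|S|>p^2q$, pigeonhole to conclude that $S$ determines every direction of order $q$, and then apply Corollary \ref{corequigen} (equidistribution of the spectrum) to force $q^2\mid|S|$, a contradiction. The only cosmetic difference is that you locate a single $\Z_q^2$-coset with at least $q+1$ points of $S$ (which determines all $q$-directions at once), whereas the paper pigeonholes the $p^2q$ cosets of each $\Z_q$-subgroup separately; your handling of the boundary case $|S|=p^2q$ via the $\gcd$ condition is, if anything, slightly more explicit than the paper's.
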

	\begin{proof}
		Without loss of generality assume \(p<q\).  For a contradiction, we assume \(p^2 q < |S|\).  For any \(\Z_q\) subgroup of \(G\), there are \(p^2 q\) cosets so \(S\) must have at least two points in one of them.  As this \(\Z_q\) subgroup was chosen arbitrarily, \(S\) determines every direction of order \(q\).  Since \(S\) is a spectrum for $\Lambda$, this means that \(q^2 \mid |\La|=|S|\) giving a contradiction.
	\end{proof}
	From now on we assume $pq \dd |S|$ and $pq<|S|<pq \min\{p,q\}$.
	\begin{lem}\label{lemp2q2}
		There are $0 \ne u \in \Z_p^2$ and $0 \ne v \in \Z_q^2$ such that $\lambda u \not\in S-S$ and $\mu u \not\in S-S$ for any $\lambda \in \Z_p^*=\Z_p\setminus \{ 0\}$ and $\mu \in \Z_q^*=\Z_q\setminus \{ 0\}$.
	\end{lem}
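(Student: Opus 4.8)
The plan is to read the statement as the conjunction of two independent assertions: that $S$ fails to determine at least one direction of order $p$ (some $[u,0]$ with $0\ne u\in\Z_p^2$), and, symmetrically, that $S$ fails to determine at least one direction of order $q$ (some $[0,v]$ with $0\ne v\in\Z_q^2$); here I read the second displayed condition as $\mu v\notin S-S$. These two tasks decouple completely, so the required $u$ and $v$ may be chosen separately, one from each side, after which the lemma follows by simply pairing them.

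For the $p$-side I would argue by contradiction. Suppose no such $u$ exists, i.e. for every $0\ne u\in\Z_p^2$ there is some $\lambda\in\Z_p^*$ with $\lambda u\in S-S$. Since the directions of order $p$ in $G$ are exactly the classes $[a,0]$ of the subgroup $H=\{(a,0)\colon a\in\Z_p^2\}\cong\Z_p^2\le G$, and $w\sim(a,0)$ forces $w=(\lambda a,0)$ for some $\lambda\in\Z_p^*$, this assumption says precisely that $S$ determines every direction in $H$. As $S$ is spectral, Corollary \ref{corequigen} applied to $H$ gives $|H|=p^2\mid|S|$. But the standing hypothesis $pq\dd|S|$ means $\gcd(|S|,|G|)=\gcd(|S|,p^2q^2)=pq$, so $p^2\nmid|S|$, a contradiction. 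Hence some $0\ne u\in\Z_p^2$ satisfies $[u,0]\notin\D(S)$, i.e. $\lambda u\notin S-S$ for all $\lambda\in\Z_p^*$.

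The $q$-side is identical with the roles of $p$ and $q$ exchanged: if $S$ determined every direction $[0,b]$ of the subgroup $\{(0,b)\colon b\in\Z_q^2\}\cong\Z_q^2$, then $q^2\mid|S|$ by Corollary \ref{corequigen}, again contradicting $pq\dd|S|$ (which also forbids $q^2\mid|S|$). This yields the desired $v$, and combining the two gives the statement.

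I do not expect a real obstacle here. The only points requiring care are (i) identifying the directions of order $p$ (resp.\ $q$) in $G$ with the directions of the subgroup $\Z_p^2$ (resp.\ $\Z_q^2$), so that the hypothesis of Corollary \ref{corequigen} is met, and (ii) the elementary arithmetic that $pq\dd|S|$ rules out both $p^2\mid|S|$ and $q^2\mid|S|$. One could instead run the contradiction through Lemma \ref{lemeqdsubgrp}/Lemma \ref{lemp2mid} applied to the spectrum $\La$, whose Fourier transform vanishes on $S-S$ and hence, by Lemma \ref{lemcharacterequiv}, on every nonzero element of $\Z_p^2$ (resp.\ $\Z_q^2$); but invoking Corollary \ref{corequigen} directly is the cleanest route.
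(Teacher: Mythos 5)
Your proof is correct and is essentially the paper's argument: the paper's one-line proof (``otherwise $p^2\mid|S|$ or $q^2\mid|S|$ by Lemma \ref{lemp2mid}'') is exactly your contradiction, with Corollary \ref{corequigen} playing the role of its special case Lemma \ref{lemp2mid}, and the arithmetic that $pq\dd|S|$ excludes both divisibilities. Your reading of the misprint $\mu u$ as $\mu v$ is also the intended one, as the later uses of the lemma confirm.
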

	\begin{proof}
		Otherwise $p^2 \mid |S|$ or $q^2 \mid |S|$ by Lemma \ref{lemp2mid}.
	\end{proof}
	\begin{lem}\label{lema}
		For every $0 \ne u \in \Z_p^2$ and $0 \ne v \in \Z_q^2$ we have $\chi_{u+x_u}(S)=0$ and $\chi_{v+y_v}(S)=0$ for some $x_u \in \Z_q^2$ and $y_v \in \Z_p^2$. 
	\end{lem}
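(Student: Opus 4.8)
The plan is to argue by contradiction, using three facts: that the spectrum $\La$ is itself a spectral set (with spectrum $S$), that $|\La|=|S|>pq$, and the translation between ``directions determined by $\La$'' and ``characters annihilating $S$'' furnished by Lemma~\ref{lemcharacterequiv}. Recall that since $(\La,S)$ is a spectral pair, $\chi_w(S)=0$ for every $w\in\La-\La$. Combined with Lemma~\ref{lemcharacterequiv} this gives the one-sided dictionary we need: if a direction $[g]\in\D(\La)$, pick a representative $w\sim g$ lying in $\La-\La$; by the Chinese Remainder Theorem there is an integer $k$ coprime to $pq$ (hence to $|G|$) with $kw=g$ in $G$, so $\chi_g(S)=\chi_{kw}(S)=0$. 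Equivalently, $\chi_g(S)\neq 0$ forces $[g]\notin\D(\La)$.

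First I would fix $0\neq u\in\Z_p^2$ and suppose, toward a contradiction, that $\chi_{u+x}(S)\neq 0$ for every $x\in\Z_q^2$; this is exactly the negation of the first assertion, recalling $\chi_{u+x}(S)=\chi_{(u,x)}(S)$ and that $x=0$ is allowed. For $x=0$ the dictionary gives $[u,0]\notin\D(\La)$. For $0\neq x=b$, the direction $[u,b]$ has representatives precisely the elements $(\lambda u,\mu b)$ with $\lambda\in\Z_p^*$, $\mu\in\Z_q^*$, and each such element, if it lay in $\La-\La$, would force $\chi_{(u,b)}(S)=0$ by Lemma~\ref{lemcharacterequiv}; hence $[u,b]\notin\D(\La)$ for every $0\neq b\in\Z_q^2$. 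Now $|\La|=|S|>pq$, so the ``$\La$ is a tile with $|\La|=pq$'' alternative of Lemma~\ref{lem:obs} applied to $\La$ is excluded; therefore, for this $u$ and every $0\neq b\in\Z_q^2$, the only remaining possibility is $[0,b]\in\D(\La)$. Thus $\La$ determines every direction of the subgroup $\{0\}\times\Z_q^2\cong\Z_q^2$, whence $q^2\mid|\La|=|S|$ by Corollary~\ref{corequigen}, contradicting $pq\dd|S|$. This proves $\chi_{u+x_u}(S)=0$ for some $x_u\in\Z_q^2$; the claim about $v$ follows verbatim after interchanging the roles of $p$ and $q$ and using $p^2\nmid|S|$ in place of $q^2\nmid|S|$.

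The only delicate points are bookkeeping: making the direction/character correspondence precise (the CRT choice of $k$ in Lemma~\ref{lemcharacterequiv}, and correctly listing the representatives of a mixed direction $[u,b]$ as $(\lambda u,\mu b)$ with both $\lambda,\mu$ invertible), and checking that the running hypothesis $pq<|S|<pq\min\{p,q\}$ is used only through $|S|>pq$ (to discard the tiling alternative in Lemma~\ref{lem:obs}) and through $pq\dd|S|$ (which yields both $p^2\nmid|S|$ and $q^2\nmid|S|$). I do not anticipate a genuine obstacle beyond this.
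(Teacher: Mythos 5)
Your proof is correct, but it takes a different route from the paper's. The paper argues directly: by Lemma \ref{lemp2q2} (i.e.\ since $q^2\nmid|S|$) no $\Z_q^2$-coset can contain more than $q$ points of $S$, so $|S|>pq$ forces the projection $S_p$ to have support of size at least $p+1$, which by pigeonhole determines every direction of $\Z_p^2$; this produces, for each $0\ne u$, an element of the difference set of the form $(\lambda u,x)$, and the conclusion follows via Lemma \ref{lemcharacterequiv} (strictly speaking this yields the vanishing statement for the dual set, and the paper implicitly invokes the $S$--$\La$ symmetry to transfer it). You instead argue by contradiction entirely on the side of $\La$: assuming $\chi_{u+x}(S)\ne 0$ for all $x\in\Z_q^2$, the direction/character dictionary (spectral pair plus Lemma \ref{lemcharacterequiv}, with the correct list of representatives $(\lambda u,\mu b)$, $\lambda\in\Z_p^*$, $\mu\in\Z_q^*$) excludes $[u,0]$ and $[u,b]$ from $\D(\La)$, so Lemma \ref{lem:obs} (applicable since $|\La|>pq$) forces $[0,b]\in\D(\La)$ for every $0\ne b$, whence $q^2\mid|S|$ by Corollary \ref{corequigen}, contradicting $pq\dd|S|$. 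Both arguments are pigeonhole at heart and use exactly $|S|>pq$ and $q^2\nmid|S|$ (resp.\ $p^2\nmid|S|$), but yours bypasses Lemma \ref{lemp2q2} and the counting of $\mathrm{supp}(S_p)$, reusing the already-established Lemma \ref{lem:obs} instead, and it has the small advantage of concluding the vanishing of $\hat{1}_S$ directly rather than for the spectrum and then switching roles; the paper's version buys the structural facts (bounded $\Z_q^2$-coset intersections, large projection support) that it reuses in the subsequent propositions.
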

	\begin{proof}
		We will only prove that $\chi_{u+x_u}(S)=0$ for some $x_u \in \Z_q^2$. The other case is similar. 
		
		Let $S_p$ denote the multiset, which is the projection of $S$ to $\Z_p^2$. It follows from Lemma \ref{lemp2q2} that none of the $\Z_q^2$ cosets contains more than $q$ elements of $S$. Thus, as $|S_p|=|S|>pq$ we have $|supp(S_p)|>p+1$ and then $supp(S_p)$ determines every direction in $\Z_p^2$. 
	\end{proof}
	
	\begin{prp}\label{lemleaf}
		Assume for every $0 \ne u \in \Z_p^2$ we have $\chi_{u+x_u}(S)=0$  for some $x_u \in \Z_q^2$.
		Then $S_p=c \Z_p^2 + q D$, where $c$ is a nonnegative integer and $D$ is a multiset on $\Z_p^2$. 
	\end{prp}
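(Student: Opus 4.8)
\emph{The plan} is to reduce the statement to the claim that all values $S_p(a)$, $a\in\Z_p^2$, lie in a single residue class modulo $q$: once this is known, set $c:=\min_{a\in\Z_p^2}S_p(a)$, a nonnegative integer, and $D(a):=\big(S_p(a)-c\big)/q$, which is then a multiset on $\Z_p^2$, and $S_p=c\,\Z_p^2+qD$ follows. Writing $g:=S_p\bmod q$ for the induced function $\Z_p^2\to\Z/q\Z$, the target is: $g$ is constant.

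\emph{First} I would, for each $0\ne u\in\Z_p^2$, control the projection $\pi_u(S_p)$ of $S_p$ onto $\Z_p^2/u^\perp\cong\Z_p$. If $x_u=0$ the hypothesis reads $\chi_u(S)=0$, which forces $\pi_u(S_p)$ to be constant (equidistribution of $S$ over the cosets of $u^\perp\times\Z_q^2$ via Lemma \ref{lemeqdsubgrp} / Corollary \ref{cor:equi}). If $x_u\ne0$ then $\chi_{u+x_u}$ has order $pq$; projecting $S$ to $\big(\Z_p^2/u^\perp\big)\times\big(\Z_q^2/x_u^\perp\big)\cong\Z_{pq}$ and feeding $\chi_{u+x_u}(S)=0$ into Lemma \ref{lemcuberule1}, that projection is a nonnegative sum of $\Z_p$-cosets and $\Z_q$-cosets of $\Z_{pq}$; projecting once more onto the $\Z_p$-factor, each $\Z_p$-coset becomes the constant $1$ and each $\Z_q$-coset contributes $q$ at a single point, so $\pi_u(S_p)$ takes the value $k_u+q\cdot(\text{nonnegative integer})$ everywhere, $k_u$ being the number of $\Z_p$-cosets. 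Either way, the $p$ values of $\pi_u(S_p)$ are pairwise congruent modulo $q$.

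\emph{Next} comes a finite-geometry step. As $u$ ranges over the nonzero elements of $\Z_p^2$, the subgroup $u^\perp$ ranges over all $p+1$ lines through the origin, so the previous paragraph says: for each of the $p+1$ parallel classes of affine lines of $\Z_p^2$, the quantity $\sum_{b\in L}g(b)\in\Z/q\Z$ is the same for every line $L$ in the class. Summing this common value over the $p$ lines of a class recovers $\sum_{a}g(a)$, so the common value equals $p^{-1}\sum_a g(a)$ — using that $\gcd(p,q)=1$ makes $p$ invertible modulo $q$ — \emph{independently of which class}; call it $s$. Then, fixing any $a\in\Z_p^2$, I add up $\sum_{b\in L}g(b)$ over the $p+1$ lines $L$ through $a$: on one hand the total is $(p+1)s$; on the other hand it equals $(p+1)g(a)+\sum_{b\ne a}g(b)=p\,g(a)+\sum_b g(b)$, since $a$ lies on all $p+1$ of these lines while every other point lies on exactly one of them. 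As $\sum_b g(b)=p\,s$, this yields $p\,g(a)=s$, hence $g(a)=p^{-1}s$ for all $a$, so $g$ is constant, finishing the proof.

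The routine parts are the identifications in the first step — which quotient is which, and how $\Z_p$- and $\Z_q$-cosets of $\Z_{pq}$ transform under the further projection to $\Z_p$. The only real idea is the incidence count in the last step, which I expect to be the main (if brief) obstacle; it goes through unchanged when $p=2$, where there are $3$ lines through each point and $2$ is still invertible modulo the odd prime $q$.
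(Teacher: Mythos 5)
Your proposal is correct and follows essentially the same route as the paper's proof: reduce to showing $S_p$ is constant modulo $q$, use the hypothesis together with Lemma \ref{lemcuberule1} (and equidistribution when $x_u=0$) to show that parallel line sums in $\Z_p^2$ are congruent modulo $q$, hence every line sum is $\equiv p^{-1}\sum_x S_p(x) \pmod q$, and finish by counting the $p+1$ lines through a point. The only differences are cosmetic (you solve for $g(a)$ directly rather than comparing two points, and you spell out the mod-$q$ bookkeeping of the coset decomposition slightly more explicitly).
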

	\begin{proof}
		
		For any $0\ne u\in\Z_p^2$ and $x\in\Z_p^2$ the cosets $x+\langle u \rangle \in \Z_p^2$ are called lines. Two lines are parallel if their intersection is empty. 
		
		We intend to prove that the multiset $S_p$ defined on $\Z_p^2$ is constant$\pmod{q}$. 
		First we show that if $L_1$ and $L_2$ are parallel lines in $\Z_p^2$, then $$\sum_{x \in L_1} S_p(x) \equiv \sum_{x \in L_2} S_p(x) \pmod{q}.$$ 
		Indeed, write $L_1=x_1+\langle u' \rangle$ and $L_2=x_2+\langle u' \rangle$. Let $0\ne u \in \Z_p^2$ be orthogonal\footnote{i.e., the dot product introduced on p.3 of $u$ and $u'$ is 0.} to $u'$.    
		If $\chi_u(S)=0$, then $\sum_{x \in L_1} S_p=\sum_{x \in L_2} S_p$. On the other hand, if $\chi_{u+x_u}(S)=0$, where $0 \ne x_u \in \Z_q^2$, them we first project $S$ to $\langle u,x_u\rangle$. The projection is the sum of $\Z_p$ cosets and $\Z_p$ cosets by Lemma \ref{lemcuberule1} so we obtain $\sum_{x \in L_1} S_p \equiv \sum_{x \in L_2} S_p \pmod{q}$. 
		The statement above follows from our conditions that at least one of the two cases holds. 
		
		Since $p$ and $q$ are different primes we obtain $$\sum_{x \in L} S_p(x) \equiv \frac{1}{p} \sum_{x \in \Z_p^2} S_p(x) \pmod{q}$$ for every line $L$ in $\Z_p^2$. 
		
		Let $x \in \Z_p^2$ and let $\mathcal{L}_x$ be the set of lines containing $x$. It follows from the previous observation that \[\sum_{L \in \mathcal{L}_a} \sum_{y\in  L } S_p(y)\equiv \sum_{L \in \mathcal{L}_b} \sum_{y\in  L } S_p(y) ~ \pmod{q} \]
		for every $a, b \in \Z_p^2$. This gives 
		\[ pS_p(a)+\sum_{x \in \Z_p^2}S_p(x) \equiv pS_p(b)+\sum_{x \in \Z_p^2}S_p(x) \pmod{q}.\]
		Since $(p,q)=1$ we have $S_p$ is constant$\pmod{q}$. Thus $S_p=c \Z_p^2+qD$ for some $c \in \Z$ and multiset $D$. We could have more than one choice for $c$ and $D$, but if $c=\min\{ S(x) \mid x \in \Z_p^2\}$, then $c \ge 0$ and $D$ is a multiset and this choice of $c$ and $D$ is unique.  
	\end{proof}
	
	\begin{cor}\label{corcd}
		Let $(S,\La)$ be a spectral pair with $pq\mid\mid |S|=|\La|>pq$. Then $S_p=qD$, where $D\ne \Z_p^2$ is a nonempty set ($c=0$).
	\end{cor}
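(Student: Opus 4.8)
The plan is to feed the corollary's hypotheses into the structural result just proved (Proposition~\ref{lemleaf}) and then extract the four assertions — $c=0$, $D$ a set, $D\neq\Z_p^2$, $D$ nonempty — by an elementary counting-and-divisibility argument. First I would invoke Lemma~\ref{lema}: for every $0\neq u\in\Z_p^2$ it supplies some $x_u\in\Z_q^2$ with $\chi_{u+x_u}(S)=0$, which is exactly the hypothesis needed to apply Proposition~\ref{lemleaf}. Applying that proposition gives $S_p=c\,\Z_p^2+qD$ with $c=\min\{S_p(x):x\in\Z_p^2\}\geq 0$ and $D$ a multiset on $\Z_p^2$; the entire remaining task is to show $c=0$ and that $D$ is a nonempty proper subset of $\Z_p^2$.

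The key step, and the one that genuinely uses the standing assumption $pq<|S|<pq\min\{p,q\}$ (which is in force here although the corollary only restates $pq\dd|S|$ and $|S|>pq$), is pinning down $c$. Since projection preserves cardinality, $|S|=|S_p|=cp^2+q|D|$. From $pq\dd|S|$ we get $q\mid|S|$, hence $q\mid cp^2$; as $p$ and $q$ are distinct primes, $\gcd(q,p^2)=1$ and so $q\mid c$. Thus either $c=0$ or $c\geq q$, and in the latter case $|S|=cp^2+q|D|\geq p^2q\geq pq\min\{p,q\}$, contradicting $|S|<pq\min\{p,q\}$. Therefore $c=0$ and $S_p=qD$. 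I expect this short case split to be the only real content of the proof.

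To finish, I would observe that $D$ is genuinely $\{0,1\}$-valued: by Lemma~\ref{lemp2q2} the set $S$ fails to determine some direction $[0,v]$ with $0\neq v\in\Z_q^2$, so inside any coset of $\Z_q^2$ the pairwise differences of points of $S$ avoid the line $\langle v\rangle$ and such a coset meets $S$ in at most $q$ points; hence $S_p(x)\leq q$ for all $x$, and $S_p=qD$ forces $D(x)\leq 1$. If $D=\Z_p^2$ then $|S|=p^2q$, again contradicting $|S|<pq\min\{p,q\}$, so $D\neq\Z_p^2$; and $q|D|=|S|>pq$ gives $|D|>p$, so $D$ is nonempty. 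The only subtlety is bookkeeping — making sure that every size bound invoked traces back to the running hypothesis stated just before the corollary rather than to its (weaker) restated hypotheses.
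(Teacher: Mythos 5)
Your proof is correct and follows essentially the same route as the paper: apply Lemma \ref{lema} to get the hypothesis of Proposition \ref{lemleaf}, and then use the consequence of Lemma \ref{lemp2q2} that each $\Z_q^2$ coset meets $S$ in at most $q$ points to force $c=0$, $D(x)\le 1$, and the size constraints on $D$. The only cosmetic difference is in how $c=0$ is obtained: the paper notes that $c>0$ would give a $\Z_q^2$ coset with more than $q$ points of $S$ and hence $q^2\mid|S|$ by spectrality, while you use the divisibility $q\mid c$ together with the standing bound $pq<|S|<pq\min\{p,q\}$; both arguments are valid.
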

	\begin{proof}
		$D \not\equiv 0$ since $p^2 \nmid |S|$. Further $c=0$, otherwise we have a $\Z_q^2$ coset containing more than $q$ elements of $S$, which implies $q^2 \mid |S|$ since $S$ is spectral, a contradiction. 
		As a corollary of Lemmata \ref{lemp2q2} and \ref{lemleaf} we obtain that the intersection of $S$ with each $\Z_q^2$ coset is either of size $0$ or $q$. This follows again from the fact that if  a $\Z_q^2$ coset contains more than $q$ elements, then $q^2 \mid |S|$ by Lemma \ref{lemp2q2}. 	
	\end{proof}
	Note that the same holds for $\La$ since we only used the condition that $S$ is a spectral. 
	Further the role of $p$ and $q$ can also be switched in Corollary \ref{corcd}.
	
	Assume both $S$ and $\Lambda$ have the previously described structure. 
	The $\Z_q^2$ cosets can be indexed by the elements of $\Z_p^2$ so we write $K_{a}=((\Z_q^2+a) \cap S)-a$ for $a \in \Z_p^2$.  
	
	\subsubsection{Special leaf structure}
	In this subsection we handle the following case.
	
	{\bf Assumption A}:
	There exists an $0 \ne u \in \Z_p^2$ such that for every $b \in \Z_p^2$ and $\lambda, \lambda' \in \Z_p$ we have either $K_{b+\lambda u}=\emptyset$ or $K_{b+\lambda' u}=\emptyset$ or $K_{b+\lambda u}=K_{b+\lambda' u}$.
	
	Note that the role of $S$ and $\La$ can be exchanged in this section so the statements for $S$ remain valid for $\La$. Hence, we prove our statements only for $S$. 
	Let $0\ne u'\in \Z_p^2$ be orthogonal to $u$. 
	\begin{lem}\label{lemunotequid}
		$S$ projected to $u'$ is not equidistributed.  
	\end{lem}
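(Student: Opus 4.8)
The plan is a proof by contradiction: assume that the projection of $S$ onto $G/\chi_{u'}^{\perp}$ is the constant function and derive a contradiction from Assumption A. The first step is bookkeeping. Since $u\perp u'$, the kernel of $\chi_{u'}$ is $\langle u\rangle\times\Z_q^2$, so $G/\chi_{u'}^{\perp}\cong\Z_p$ is indexed by the $p$ cosets $\ell$ of $\langle u\rangle$ in $\Z_p^2$, and the value of the projection on $\ell$ is $\sum_{a\in\ell}S_p(a)$. By Corollary \ref{corcd}, $S_p=qD$ for a nonempty set $D\subseteq\Z_p^2$, so this value equals $q\,|D\cap\ell|$. Writing $t=|S|/(pq)$, the standing hypotheses $pq\dd|S|$ and $pq<|S|<pq\min\{p,q\}$ force $\gcd(t,pq)=1$ and $2\le t<\min\{p,q\}\le p$; in particular $t\ge 2$ and $p\nmid t$. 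If the projection were constant, then $q\,|D\cap\ell|$ would be independent of $\ell$, and summing over the $p$ cosets gives $|D\cap\ell|=t$ for every $\ell$ (so each coset actually meets $D$).

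Next I would bring in Assumption A. It says precisely that along each coset $\ell$ the nonempty members of $\{K_a:a\in\ell\}$ coincide; since $|D\cap\ell|\ge 1$, the common value $E_\ell\subseteq\Z_q^2$ is well defined and $|E_\ell|=q$ (recall $|K_a|\in\{0,q\}$ and $K_a\neq\emptyset$ exactly when $a\in D$). Hence $S=\bigcup_{\ell}\{(a,y):a\in D\cap\ell,\ y\in E_\ell\}$, the union being over the $p$ cosets of $\langle u\rangle$. The crucial computation is then to fix some $y_0\in E_{\ell_0}$ (possible since $D\neq\emptyset$) and count the points of $S$ in the $\Z_p^2$-coset over $y_0$: each coset $\ell$ with $y_0\in E_\ell$ contributes exactly the $|D\cap\ell|=t$ points $\{(a,y_0):a\in D\cap\ell\}$, and cosets with $y_0\notin E_\ell$ contribute nothing, so this $\Z_p^2$-coset meets $S$ in exactly $tk$ points, where $k=|\{\ell:y_0\in E_\ell\}|\ge 1$.

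To close, I would apply Corollary \ref{corcd} with the roles of $p$ and $q$ interchanged (legitimate for $S$ by the remark following that corollary): every $\Z_p^2$-coset meets $S$ in $0$ or $p$ points. But the number $tk$ is never $0$ or $p$: it is at least $2$, and $tk=p$ with $p$ prime would force $t\in\{1,p\}$, contrary to $2\le t<p$. This contradiction proves the lemma. The only step requiring genuine care is the counting identity for a single $\Z_p^2$-coset in the previous paragraph --- this is exactly where Assumption A is used in an essential way, and it is what couples the ``leaf'' structure to the divisibility conditions $pq\dd|S|$ and $|S|<pq\min\{p,q\}$ so that the arithmetic contradiction appears; everything else is already contained in Corollary \ref{corcd} and the lemmas preceding it.
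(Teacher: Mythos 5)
Your proof is correct and follows essentially the same route as the paper: equidistribution forces each $\langle u,\Z_q^2\rangle$-coset to contain $qt$ points with $t=|S|/(pq)$, Assumption A makes the nonempty $\Z_q^2$-fibres along each $\langle u\rangle$-line coincide, and the role-switched Corollary \ref{corcd} (fibres over $\Z_q^2$ have size $0$ or $p$) then forces $t\mid p$, contradicting $pq<|S|<pq\min\{p,q\}$. Your version just spells out the counting over a fixed $y_0\in\Z_q^2$ that the paper compresses into one sentence.
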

	\begin{proof}
		Assume indirectly it is equidistributed. Then each  $\langle u,\Z_q^2\rangle$ coset contains $kq$ elements of $S$. Since the $\Z_q^2$ cosets in $\langle u,\Z_q^2\rangle$ intersect $S$ the same way by Assumption A, we obtain $k \mid p$ since the weight of every point of the multiset $S_q$ is $p$ or $0$. This contradicts the fact that $pq<|S|<p^2q$.
	\end{proof}
	
	\begin{lem}\label{lemns}
		Let $S$ be a set with $pq\mid\mid |S|$ and $pq<|S|<pq\min\{p,q\}$. Suppose that Assumption A holds. Then $S$ is not spectral. 
	\end{lem}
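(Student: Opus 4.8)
We argue by contradiction: suppose $S$ is spectral with spectrum $\Lambda$, $|\Lambda|=|S|$. Throughout, $u$ is the direction from Assumption A and $u'\in\Z_p^2$ a fixed nonzero vector orthogonal to $u$. First I record the structure of $S$. By Corollary \ref{corcd} and the remarks following it (the roles of $S$ and $\Lambda$, and of $p$ and $q$, being interchangeable here), both $S$ and $\Lambda$ meet every $\Z_q^2$-coset in $0$ or $q$ points and every $\Z_p^2$-coset in $0$ or $p$ points. Indexing the $\langle u,\Z_q^2\rangle$-cosets by the lines $\ell\subseteq\Z_p^2$ of direction $[u]$, Assumption A gives $S\cap(\text{coset of }\ell)=P_\ell\times K_\ell$ with $P_\ell\subseteq\ell$, $m_\ell:=|P_\ell|\le p$, and $|K_\ell|=q$ whenever $m_\ell\ge1$. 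Put $D:=\bigcup_\ell P_\ell\subseteq\Z_p^2$ and $E:=\bigcup_\ell K_\ell\subseteq\Z_q^2$. Counting the points of $S$ in the $\Z_p^2$-coset over $b\in E$ yields the fibre identity $\sum_{\ell\colon b\in K_\ell}m_\ell=p$; summing over $b\in E$ gives $q\sum_\ell m_\ell=p|E|$, so $p\mid\sum_\ell m_\ell$ and hence $|S|=q\sum_\ell m_\ell=pq\,d_0$ for an integer $d_0$ with $2\le d_0\le\min(p,q)-1$ (using $pq<|S|<pq\min(p,q)$). In particular $|D|=pd_0>p$, so by a pigeonhole over the $p$ cosets of any $\langle w\rangle$ the set $D$ determines every direction of $\Z_p^2$; and Lemma \ref{lemunotequid} together with Lemma \ref{lemcharacterequiv} says $(m_\ell)_\ell$ is not constant.

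Next I would prove a dichotomy for $S$: either (i) $S$ determines every direction of the subgroup $\Z_p^2\times\{0\}$, or (ii) $S$ is a union of cosets of $H:=\langle u\rangle\times\{0\}$. In case (i), Corollary \ref{corequigen} forces $p^2\mid|S|$, contradicting $pq\dd|S|$. In case (ii), write $S=\pi^{-1}(S')$ for the quotient map $\pi\colon G\to G/H\cong\Z_p\times\Z_q^2$; then $|S'|=|S|/p=q\,d_0$. Since $S$ is $H$-periodic, $H\subseteq S-S$, so $\hat1_\Lambda$ vanishes on $H\setminus\{0\}$ and $\Lambda$ is equidistributed among the cosets of $H^\perp$ (Lemma \ref{lemeqdsubgrp}); hence $|\Lambda\cap H^\perp|=|\Lambda|/|H|=|S'|$, and for distinct $\lambda,\lambda'\in\Lambda\cap H^\perp$ the characters $\chi_\lambda,\chi_{\lambda'}$, read as characters of $G/H$, are orthogonal on $S'$ because $\chi_{\lambda-\lambda'}(S')=|H|^{-1}\chi_{\lambda-\lambda'}(S)=0$. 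Thus $S'$ is spectral in $G/H\cong\Z_q^2\times\Z_p$, so by \cite{KS2021} it tiles $G/H$, whence $|S'|=q\,d_0$ divides $|G/H|=pq^2$, i.e.\ $d_0\mid pq$ — impossible since $2\le d_0<\min(p,q)$. This rules out case (ii).

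It remains to establish the dichotomy, which is where the difficulty lies. The two extremes are clear: if $[w,0]\in\D(S)$ for every direction $[w]$ of $\Z_p^2$ (note $[u,0]\in\D(S)$ always, since some $m_\ell\ge2$) we are in case (i); and if $[w,0]\notin\D(S)$ for every $[w]\ne[u]$, then every $\Z_p^2$-fibre $M_b$ ($b\in E$, $|M_b|=p$) is transversal to all directions except $[u]$, which (no $p$-set being transversal to all $p+1$ directions, and a $p$-set transversal to all but one direction being a coset of the corresponding $\Z_p$-subgroup) forces each $M_b$ to be a coset of $\langle u\rangle$, so $S$ is a union of $H$-cosets, i.e.\ case (ii). The main obstacle is the intermediate range, where $\D(S)\cap(\Z_p^2\times\{0\})$ contains $[u,0]$ and some but not all of the other directions. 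To rule it out I would exploit the spectrum: each determined direction $[w,0]\in\D(S)$ forces $\chi_{(w,0)}(\Lambda)=0$, i.e.\ $\Lambda$ equidistributed modulo $w^\perp\times\Z_q^2$; combining several such equidistributions with $[u',0]\notin\D(\Lambda)$ (Lemma \ref{lemunotequid}) and with the fibre structure of $\Lambda$ (whose size is again $pq\,d_0$) should either produce enough new determined directions of $S$ to land in case (i), or collapse the fibre structure back to case (ii). Carrying out this amplification — obtaining control of several directions of $\Z_p^2$ at once — is the crux of the proof.
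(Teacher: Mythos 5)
Your structural setup (the product form $P_\ell\times K_\ell$ of $S$ inside each $\langle u,\Z_q^2\rangle$-coset, the $0$-or-$q$ and $0$-or-$p$ fibre sizes from Corollary \ref{corcd} and its $p\leftrightarrow q$ twin, and the count $|S|=pq\,d_0$ with $2\le d_0<\min(p,q)$) is sound, and your treatment of the two extreme situations is fine: case (i) dies by Corollary \ref{corequigen}, and case (ii) — $S$ a union of $\langle u\rangle$-cosets — dies by passing to $G/H\cong\Z_p\times\Z_q^2$, where the induced set is spectral and the result of \cite{KS2021} plus the divisibility $qd_0\mid pq^2$ gives a contradiction. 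But the proof is not complete: the dichotomy itself, which you correctly identify as the crux, is never established. There is no argument ruling out the intermediate regime in which $\D(S)\cap(\Z_p^2\times\{0\})$ contains $[u,0]$ and some, but not all, of the remaining $p$ directions, and nothing in Assumption A or in your fibre counts prevents this a priori. The closing sentence (``combining several such equidistributions \dots should either produce enough new determined directions \dots or collapse the fibre structure'') is a plan, not a proof; in particular you never specify how the equidistribution of $\Lambda$ modulo the determined directions interacts with the product structure of $S$, which is exactly where work is required. So as it stands the argument has a genuine gap at its central step.

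For comparison, the paper avoids any such dichotomy over directions of $\Z_p^2$. It fixes $0\ne v\in\Z_q^2$ and works in the two-dimensional quotient $\langle u,v\rangle\cong\Z_p\times\Z_q$: Assumption A forces the projection $S_{\langle u,v\rangle}$ to have the form $l\cdot h_i(j)$ with each row summing to $q$, which already shows that if this projection is everywhere positive then $\chi_v(S)=0$. If instead $\chi_v(S)\ne 0$, then (using $\chi_u(S)\ne 0$ from Lemma \ref{lemunotequid} and $|\La|>pq$) Lemma \ref{lem:obs} applied to $\La$ forces $\chi_{u+v}(S)=0$, so by Lemma \ref{lemcuberule1} the projection is a sum of $\Z_p$- and $\Z_q$-cosets containing at least one of each but missing some point — and this is incompatible with the row structure coming from Assumption A. Hence $\chi_v(S)=0$ for every $0\ne v\in\Z_q^2$, so $q^2\mid|S|$ by Lemma \ref{lemp2mid}, contradicting $pq\dd|S|$. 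If you want to salvage your route, you would need to supply the missing amplification step; alternatively, note that the paper's argument gets the contradiction directly from the Fourier side without classifying which directions of $\Z_p^2$ the set $S$ determines.
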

	\begin{proof}
		Let $v \in \Z_q^2$. Let $S_{\langle u,v  \rangle}$ 
		denote the projection of $S$ to $\langle u,v  \rangle$.
		By Assumption A, $S_{\langle u,v  \rangle}(i,j)=l \cdot h_i(j)$, where $h_i: \Z_q \to \N$  and $l$ is the number of nonempty $\Z_p^2$ cosets in the $\Z_p^2\times \Z_q$ coset projecting to these points. Further $\sum_{x \in  \Z_q }h_i(x)=q$, since by Corollary \ref{corcd}, $S$ intersects with each $\Z_p^2$ coset in either 0 or $q$ times. 
		This implies that if there is an $i \in \Z_p$ such that $S_{\langle u,v  \rangle}(i,j) > 0$ for every $j \in \Z_q$, then $S_{\langle u,v  \rangle}(i,j)$ is constant as a function of $j$, and if $S_{\langle u,v  \rangle}(x)>0$ for every $x \in \Z_p \times \Z_q$, then $S_{\langle u,v  \rangle}(i,j)$ is constant for every $i$ as a function of $j$. This implies that $\chi_v(S)=0$. 
		
		Assume $\chi_v(S) \ne 0$. Since $|S|=|\La|>pq$, by Lemma \ref{lem:obs} either $v\in\D(\La)$ or $u\in \D(\La)$ or $u+v\in \D(\La)$. The first two are excluded as $\chi_u(S) \ne 0$ and $\chi_v(S)\ne 0$. Thus $u+v\in \D(\La)$ holds, which implies that $\chi_{u+v}(S)=0$.
		It follows from the previous argument that $S_{\langle u,v  \rangle}(i,j)=0$ for at least one pair $(i,j)$, otherwise $\chi_v(S)=0$. By Lemma \ref{lemcuberule1}, $S_{\langle u,v  \rangle}$ can be written as the sum of characteristic functions of $\Z_p$ cosets and $\Z_q$ cosets.
		
		The $\Z_p$ and $\Z_q$ cosets containing $(i,j)\in \Z_p \times \Z_q$ are not in the sum of cosets determining 
		$S_{\langle u,v  \rangle}$. Since $\chi_u(S) \ne 0$ and $\chi_{v}(S) \ne 0$, this sum contains a $\Z_q$ coset and a $\Z_p$ coset as well. Since $S_{\langle u,v  \rangle}$ is positive in this $\Z_q$ coset, as we have seen it implies that $S_{\langle u,v  \rangle}$ is constant on this $\Z_q$ coset, which contradicts the fact that it is the sum of $\Z_p$ and $\Z_q$ cosets, where some $\Z_p$ cosets are missing, while some are present (with multiplicity at least 1). This contradiction shows that under Assumption A and $pq\mid\mid|S|>pq$, for every $0\ne v\in \Z_q^2$, $\chi_{v}(S)= 0$.  
		By Lemma \ref{lemp2mid}, this implies that $q^2 \mid |S|$, which again contradicts the assumption $pq\mid\mid|S|$.
		
		Thus we obtain that $S$ is not  spectral. 
	\end{proof}
	\subsubsection{General case}
	\begin{prp}\label{prplotsofzeros}
		Let $(S,\La)$ be a spectral pair with $pq\mid\mid |S|=|\La|>pq$. 
		Suppose $\chi_{u+v}(S) \ne 0$, where $0 \ne u \in \Z_p^2$ and $0 \ne u \in \Z_q^2$. Then $\chi_u(S)=0$ and $\chi_v(S)=0$.
	\end{prp}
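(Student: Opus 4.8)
The plan is to first reduce, by the $p\leftrightarrow q$ symmetry of all the standing hypotheses, to proving only $\chi_v(S)=0$: interchanging the two direct factors $\Z_p^2$ and $\Z_q^2$ (together with $p\leftrightarrow q$ and $u\leftrightarrow v$) turns the spectral pair $(S,\La)$, the relation $pq\mid\mid|S|=|\La|>pq$, and the leaf structure of Corollary~\ref{corcd} into data of the same kind, so the assertion $\chi_u(S)=0$ follows from the assertion $\chi_v(S)=0$ applied in the relabelled group. Next, since $|\La|=|S|>pq$, Lemma~\ref{lem:obs} applied to $\La$ with $a=u$, $b=v$ yields $[u,0]\in\D(\La)$, $[0,v]\in\D(\La)$, or $[u,v]\in\D(\La)$. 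If $[u,v]\in\D(\La)$, then $\La-\La$ contains some $au+bv$ with $a\in\Z_p^*$, $b\in\Z_q^*$, so $\chi_{au+bv}(S)=0$; applying Lemma~\ref{lemcharacterequiv} with a scalar $t$ coprime to $pq$ satisfying $ta\equiv1\pmod p$ and $tb\equiv1\pmod q$ gives $\chi_{u+v}(S)=0$, contradicting the hypothesis. If $[0,v]\in\D(\La)$, then $\La-\La$ contains $bv$ with $b\in\Z_q^*$, so $\chi_{bv}(S)=0$, and Lemma~\ref{lemcharacterequiv} gives $\chi_v(S)=0$, which is what we want. Hence we are reduced to the case $[u,0]\in\D(\La)$ with $[0,v]\notin\D(\La)$, and, arguing by contradiction, $\chi_v(S)\neq0$.

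In this remaining case I would collect structural information. From $[u,0]\in\D(\La)$ and Corollary~\ref{cor:equi} we get $\chi_u(S)=0$, so by Lemma~\ref{lemeqdsubgrp} $S$ is equidistributed among the $p$ cosets of $\chi_u^\perp\cong\Z_p\times\Z_q^2$; combined with Corollary~\ref{corcd} (which says $S$ meets every $\Z_q^2$-coset in $0$ or $q$ points) this forces exactly $k:=|S|/(pq)$ of the $p$ many $\Z_q^2$-cosets inside each $\chi_u^\perp$-coset to be occupied, where $1<k<\min\{p,q\}$ and $\gcd(k,pq)=1$. From $[0,v]\notin\D(\La)$ together with the leaf structure of $\La$, every nonempty $\Z_q^2$-fiber of $\La$ is a transversal of the $q$ cosets of $\langle v\rangle$ in $\Z_q^2$. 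Moreover, by Corollary~\ref{corequigen} applied to $\La$ (which is spectral, with spectrum $S$) we may assume $\La$ does not determine every direction $[w,0]$, $w\neq0$ — otherwise $p^2\mid|\La|=|S|$, contradicting $pq\mid\mid|S|$. Finally, Lemma~\ref{lemcharacterequiv} upgrades $\chi_{u+v}(S)\neq0$ and $\chi_v(S)\neq0$ to $\chi_{au+bv}(S)\neq0$ for every $a\in\Z_p$ and every $b\in\Z_q^*$.

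The heart of the argument — and the step I expect to be the main obstacle — is turning this abundance of non-vanishing characters into a contradiction. Fixing $w\neq0$ with $[w,0]\notin\D(\La)$ and applying Lemma~\ref{lem:obs} to $\La$ with $a=w$, $b=v$, we again rule out $[0,v]\in\D(\La)$ (it would give $\chi_v(S)=0$) and so get $[w,v]\in\D(\La)$, hence $\chi_{aw+v}(S)=0$ for all $a\in\Z_p^*$. I would then project $S$ onto the quotients $G/(\chi_u^\perp\cap\chi_v^\perp)\cong\Z_p\times\Z_q$ and, if needed, $G/(\{0\}\times v^\perp)\cong\Z_p^2\times\Z_q$, so that the vanishing relations $\chi_{aw+v}(S)=0$ become instances of the cube rule (Lemma~\ref{lemcuberule1}): the relevant projections of $S$ are nonnegative sums of $\Z_p$-cosets and $\Z_q$-cosets. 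Counting the points of $S$ in the $\langle v\rangle$-cosets of $\Z_q^2$ — using on one hand that each such count is dictated by the cube-rule decomposition, and on the other hand the $0/q$ leaf structure of $S$ and the transversal structure of $\La$ — one aims to contradict the fact that $\chi_{au+bv}(S)\neq0$ for every $b\in\Z_q^*$; the expected contradiction is that $|S|$ is forced to be divisible by $q^2$ (via Lemma~\ref{lemp2mid}), or by $p^2$, or to fall outside $(pq,\,pq\min\{p,q\})$. Making this incompatibility count precise is the delicate part: it is a finite-geometry argument that must use $1<k<\min\{p,q\}$ and $\gcd(k,pq)=1$ in an essential way, since for $k=1$ or $k\geq\min\{p,q\}$ none of these divisibility obstructions would be available.
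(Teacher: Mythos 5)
Your preliminary reductions are correct and largely retrace the paper's own steps: the $p\leftrightarrow q$ symmetry, Lemma~\ref{lem:obs} applied to $\La$ (the tiling alternative being excluded since $|\La|>pq$), Lemma~\ref{lemcharacterequiv}, and the structural consequences of Corollaries~\ref{cor:equi} and~\ref{corcd} are all used correctly, and they leave exactly one configuration to eliminate: $\chi_u(S)=0$, $\chi_v(S)\neq 0$, $\chi_{u+v}(S)\neq 0$. But at precisely this point the proposal stops being a proof. The third paragraph only describes which projections you \emph{would} take and which contradiction is \emph{expected} (that $q^2\mid|S|$, or $p^2\mid|S|$, or $|S|$ leaves the range $(pq,pq\min\{p,q\})$), and you yourself flag the incompatibility count as ``the main obstacle'' and ``the delicate part''. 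No argument is actually given that the cube-rule decompositions of the projections, the $0/q$ leaf structure of $S$, and the transversal structure of the $\Z_q^2$-fibers of $\La$ force any such divisibility. Since this is the heart of the proposition, the proposal has a genuine gap.

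It is worth noting that the closing move you need is already available in the paper, and your ``transversal'' observation is the first half of it. In the leftover configuration, $\chi_v(S)\neq0$ and $\chi_{u+v}(S)\neq0$ give $[0,v]\notin\D(\La)$ and $[u,v]\notin\D(\La)$ by Lemma~\ref{lemcharacterequiv}. By Corollary~\ref{corcd} (applied to $\La$) each nonempty $\Z_q^2$-fiber $K_a=((\Z_q^2+a)\cap\La)-a$, $a\in\Z_p^2$, has exactly $q$ points, one in each $\langle v\rangle$-coset (your transversal statement). Now if $K_a$ and $K_{a+\lambda u}$ are both nonempty and $y\in K_a$, the unique point of $K_{a+\lambda u}$ lying in $y+\langle v\rangle$ differs from $a+y\in\La$ by $\lambda u+\mu v$; since $[u,v]\notin\D(\La)$ this forces $\mu=0$, i.e.\ $y\in K_{a+\lambda u}$, so $K_a=K_{a+\lambda u}$. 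Thus $\La$ satisfies Assumption A in the direction $u$, and Lemma~\ref{lemns} (applied to $\La$, which satisfies $pq\mid\mid|\La|$ and $pq<|\La|<pq\min\{p,q\}$) says $\La$ is not spectral, contradicting that $S$ is a spectrum for $\La$. This is exactly how the paper argues, up to exchanging the roles of $u$ and $v$ (it assumes $\chi_u(S)\neq0$, derives Assumption A for $\La$ in the direction $v$, and gets the second vanishing by symmetry). Your sketch instead veers toward a new projection-and-counting argument whose decisive step is left unexecuted, so as written the proposal does not establish the proposition.
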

	\begin{proof} 	
		Assume $\chi_u(S) \ne 0$. Combining this with $\chi_{u+v}(S) \ne 0$ we obtain that if $K_a$ and $K_b$ are contained in the same $\langle u, \Z_q^2 \rangle$ coset (where $K_x=((\Z_q^2+a)\cap \La)-x$), then for every $w \in \Z_q^2$, at least one of $K_a \cap ( w+ \langle v \rangle)$ and $K_b \cap ( w+ \langle v \rangle)$ is empty.
		
		Thus if $\la \in \La$, then $(\la+\langle u,v\rangle) \cap \La \subseteq \la +\langle v \rangle$. By Corollary \ref{corcd}  $\La$ has $p$ elements in $\la+ \Z_p^2$ so it has exactly one element in $(\la+ \Z_p^2)\cap x+\langle u, \Z_q^2 \rangle$ for every $x \in \Z_p^2 \times \Z_q^2$. Further every element of $\La$ contained in $x +\langle u, v \rangle$ is contained in a single $\langle v \rangle$-coset. Thus Assumption A holds for $\La$ with $v$ playing the role of $u$ and $q$ the one of $p$. Such a set cannot be spectral by Lemma \ref{lemns}. 
		
		Therefore, if $(S, \La)$ is a spectral pair and $\chi_{u+v}(S)\ne 0$, where $0 \ne u \in \Z_p^2$ and $0 \ne u \in \Z_q^2$, then $\chi_u(S)=0$. Changing the role of $u$ and $v$ implies that $\chi_v(S)=0$, as well, finishing the proof of Proposition \ref{prplotsofzeros}.
	\end{proof}
	
	\begin{prp}\label{propfin}
		If $pq\mid\mid|S|$ so that $pq<|S|<pq\min\{p,q\}$, then $S$ is not spectral. 
	\end{prp}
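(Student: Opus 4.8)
\emph{Plan (by contradiction, via Proposition~\ref{prplotsofzeros} in contrapositive form).}
Suppose $S$ is spectral with spectrum $\La$, so $(S,\La)$ is a spectral pair; by our standing assumptions $pq\dd|S|=|\La|$ and $pq<|S|<pq\min\{p,q\}$, hence $p^2\nmid|S|$ and $q^2\nmid|S|$. Since $q^2\nmid|S|$, applying Lemma~\ref{lemp2mid} to the projection of $S$ to $\Z_q^2$ (equivalently, Corollary~\ref{cor:equi}) produces some $0\ne b_0\in\Z_q^2$ with $\chi_{b_0}(S)\ne0$. I want to show that this single fact already collides with everything else.

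First, feed $b_0$ into Proposition~\ref{prplotsofzeros}: if $\chi_{u+b_0}(S)\ne0$ held for some $0\ne u\in\Z_p^2$, the proposition would force $\chi_{b_0}(S)=0$, a contradiction; hence $\chi_{u+b_0}(S)=0$ for every $0\ne u\in\Z_p^2$. Because $(u,\mu b_0)\sim(u,b_0)$ for every unit $\mu$ of $\Z_q$, Lemma~\ref{lemcharacterequiv} upgrades this to $\chi_{u+\mu b_0}(S)=0$ for all $0\ne u\in\Z_p^2$ and all $0\ne\mu\in\Z_q$.

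Now project. Let $N_0\le\Z_q^2$ be the kernel of the character $\chi_{b_0}$ restricted to $\Z_q^2$, and let $\bar S$ be the image of $S$ in $\bar G:=G/(\{0\}\times N_0)\cong\Z_p^2\times\Z_q$. A character of $\bar G$ has order $pq$ precisely when it descends from one of the $\chi_{u+\mu b_0}$ with $u\ne0$ and $\mu\ne0$, so by the previous step $\hat1_{\bar S}$ vanishes on every order-$pq$ character of $\bar G$; the $\Z_p^2\times\Z_q$ analogue of the cube rule (Lemma~\ref{lemcuberule1}) then lets us write $\bar S(\alpha,j)=D(\alpha)+C(j)$ with $D\colon\Z_p^2\to\N$ and $C\colon\Z_q\to\N$. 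On the other hand, Corollary~\ref{corcd} (available since $p^2\nmid|S|$) says $S$ omits an entire $\Z_q^2$-coset; its image is a $\Z_q$-coset $\{\alpha_*\}\times\Z_q$ of $\bar G$ on which $\bar S$ vanishes identically, which forces $C$ to be constant. Hence $\bar S(\alpha,\cdot)$ is constant for every $\alpha$, so projecting $\bar S$ onto its $\Z_q\cong\Z_q^2/N_0$ factor shows $S$ is equidistributed among the $q$ cosets of $N_0$. By Lemma~\ref{lemeqdsubgrp} this is exactly $\chi_{b_0}(S)=0$, contradicting the choice of $b_0$. Therefore $S$ cannot be spectral.

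The real combinatorial weight has already been carried by Proposition~\ref{prplotsofzeros} and the leaf-structure Corollary~\ref{corcd}, so I expect the endgame above to be bookkeeping rather than a genuinely new obstacle. The step most in need of care is the duality computation of the third paragraph: one must match the order-$pq$ characters of $\bar G$ with the $\chi_{u+\mu b_0}$ ($u,\mu\ne0$) correctly, \emph{including} the degenerate case $\chi_{b_0}(b_0)=1$ in which $N_0=\langle b_0\rangle$; and one must record the $\Z_p^2\times\Z_q$ version of Lemma~\ref{lemcuberule1}, namely that vanishing of $\hat1$ on all order-$pq$ characters is equivalent to additive separability, i.e.\ to being a nonnegative sum of $\Z_p^2$-cosets and $\Z_q$-cosets.
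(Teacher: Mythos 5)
Your argument is correct, and it reaches the contradiction by a different endgame than the paper. Both proofs start the same way: apply Proposition~\ref{prplotsofzeros} in contrapositive to a character on which $\hat 1_S$ does not vanish, obtaining a large family of order-$pq$ zeros, and then exploit an additive-separability (``leaf'') structure. But the paper picks $0\ne u\in\Z_p^2$ with $\chi_u(S)\ne 0$, projects to $\langle u,\Z_q^2\rangle\cong\Z_p\times\Z_q^2$, applies Proposition~\ref{prop1} there, and finishes with a counting argument (a coset of order $pq^2$ forced to contain at least $q^2$ points of $S$ versus at most $pq$), which is where the hypothesis $|S|<pq\min\{p,q\}$ and the normalization $p<q$ enter. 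You instead pick $0\ne b_0\in\Z_q^2$ with $\chi_{b_0}(S)\ne 0$ (Lemma~\ref{lemp2mid}), upgrade the zeros via Lemma~\ref{lemcharacterequiv}, pass to the quotient $\Z_p^2\times\Z_q$, and close structurally: the empty $\Z_q^2$-coset guaranteed by Corollary~\ref{corcd} kills the $\Z_q$-coset part of the separable decomposition, so $S$ is equidistributed among the cosets of $\chi_{b_0}^\perp=\Z_p^2\times N_0$ and Lemma~\ref{lemeqdsubgrp} yields $\chi_{b_0}(S)=0$, a contradiction with no counting and no case distinction on $p<q$ at this final stage (the upper bound on $|S|$ is of course still consumed upstream, inside the proofs of Proposition~\ref{prplotsofzeros} and Corollary~\ref{corcd}). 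What your route buys is a cleaner, purely structural finish; what it costs is the auxiliary lemma you flag yourself: the $\Z_p^2\times\Z_q$ separability statement (vanishing of $\hat 1_{\bar S}$ on all order-$pq$ characters forces $\bar S(\alpha,j)=D(\alpha)+C(j)$). This is genuinely true and routine --- it is exactly Proposition~\ref{prop1} with the roles of $p$ and $q$ interchanged (whose proof via Lemma~\ref{lemcuberule1} is symmetric in the two primes), or alternatively a two-line Fourier-support argument in the spirit of Lemma~\ref{lemeqdsubgrp} --- but it should be stated and proved, not merely ``recorded''; note also that nonnegativity of $D$ and $C$ is obtainable after normalizing by $\min D$, though your conclusion only needs $C$ constant, which follows without it. One small phrasing fix: ``equidistributed among the $q$ cosets of $N_0$'' should read ``among the $q$ cosets of $\Z_p^2\times N_0$'' (equivalently, the projection of $S$ to $\Z_q^2/N_0\cong\Z_q$ is constant), which is the form needed to invoke Lemma~\ref{lemeqdsubgrp}.
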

	\begin{proof} 
		Suppose that $S$ is spectral.
		Without loss of generality we can assume that $p<q$. As $p^2\nmid |S|$, there is a $0 \ne u \in \Z_p^2$ such that $\chi_u(S)\ne 0$ by Lemma \ref{lemp2mid}.
		If $S$ is spectral and $pq\mid\mid|S|>pq$, then   $\chi_{u+v}(S)= 0$ for every and $0 \ne u \in \Z_q^2$, by Proposition \ref{prplotsofzeros}. Hence the projection $T$ of $S$ to $\langle u, \Z_q^2 \rangle$ the conditions of Proposition \ref{prop1} hold. Thus we obtain that for $g_i(z)=T(i+z)$ ($z\in \Z_q^2$), $g_i-g_j$ is a constant function for every $i,j \in \Z_p$. If $g_i=g_j$ for every $i,j \in \Z_p$, then $\chi_u(S)=0$, a contradiction. Otherwise there are some  $i,j \in \Z_p$ such that $g_i\ge g_j+1\ge 1$, since $g_j\ge 0$. Thus there is a  coset of $\Z_q^2$ in $\Z_p\times \Z_q^2$ that contains at least $q^2$ element of $T$ (i.e., there is a coset of $\Z_p\times \Z_q^2$  in $\Z_p^2\times \Z_q^2$   containing at least $q^2$ element of $S$). On the other hand, every coset of cardinality $pq^2$ contains at most $pq$ elements of $S$, otherwise there is a coset of $\Z_q^2$ containing $q+1$ elements of $S$ which would imply that every direction appears in $S$ and, by Corollary \ref{corequigen}, we get $q^2\mid |S|=|\La|$. This is a contradiction, as for $p<q$ we have $pq<q^2$. This contradiction shows that $S$ is not spectral.  
	\end{proof}
	Proposition \ref{propfin} finishes the proof of Case 5 and the spectral tile direction of Theorem \ref{t1}.
	
	\section{Tile-spectral direction}\label{sec5}
	Let $A \bigoplus B=G$ be a tiling of $G$. Clearly, every translates of $A$ is a tiling partner of $B$ and since the role of $A$ and $B$ is symmetric we may assume $0 \in A \cap B$.
	Plainly,  $\widehat{ 1}_A\cdot\widehat{ 1}_B=|G|\cdot\delta_{1}$, where $\delta$ denotes the Dirac delta and $1$ denotes the trivial representation of $G$. 
	We recall some elementary result from \cite{KS2021}.
	\begin{lem}[Lemma 3.2 of \cite{KS2021}]\label{lemsubgroup}
		Let $G$ be an abelian group, which is the direct sum of subgroups $B$ and $P$. Let $A \subset G$ such that $A\bigoplus B=G$. Then $A$ is spectral and its spectrum is $P$.
	\end{lem}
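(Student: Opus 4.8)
The plan is to show directly that $P$, regarded as a subgroup of $\widehat G$, serves as a spectrum for $A$. Under the identification $\widehat G\cong\widehat B\times\widehat P\cong B\times P=G$ used above, the subgroup $P\le G$ corresponds to $B^{\perp}=\{\chi\in\widehat G\colon\chi(b)=1\text{ for all }b\in B\}$: indeed, a character $\chi_{(g_B,g_P)}$ is trivial on $B$ exactly when $g_B=0$, since the dot product restricted to $B$ is nondegenerate. So it suffices to prove that $\Lambda:=B^{\perp}$ is a spectrum of $A$, which amounts to two things: $|\Lambda|=|A|$, and that the characters $(\chi)_{\chi\in\Lambda}$ are pairwise orthogonal as functions on $A$.

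The cardinality is immediate from the tiling: $A\bigoplus B=G$ gives $|A|=|G|/|B|=|P|=|B^{\perp}|=|\Lambda|$. For orthogonality I would first reduce to the vanishing of Fourier coefficients. Because $\Lambda$ is a group, for distinct $\chi_1,\chi_2\in\Lambda$ the character $\chi_1\overline{\chi_2}$ lies in $\Lambda\setminus\{1\}$, and $\sum_{a\in A}\chi_1(a)\overline{\chi_2(a)}=(\chi_1\overline{\chi_2})(A)=|G|\,\hat 1_A(\chi_1\overline{\chi_2})$; hence it is enough to check that $\hat 1_A(\chi)=0$ for every nontrivial $\chi\in\Lambda$. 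The tiling relation $A\bigoplus B=G$ is equivalent to $1_A\ast 1_B=1_G$, so on the Fourier side $\hat 1_A\cdot\hat 1_B=\hat 1_G$, which is supported only at the trivial character; thus for each nontrivial $\chi$ either $\hat 1_A(\chi)=0$ or $\hat 1_B(\chi)=0$. But when $\chi\in B^{\perp}$ we have $\hat 1_B(\chi)=|G|^{-1}\sum_{b\in B}\overline{\chi(b)}=|B|/|G|\ne 0$, which forces $\hat 1_A(\chi)=0$. This is exactly the required orthogonality.

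To conclude, an orthogonal system of $|A|$ nonzero vectors in the $|A|$-dimensional space $L^2(A)$ is automatically an orthogonal basis, and each $\chi\in\Lambda$ restricts to a function of constant modulus $1$ on $A$, hence is nonzero; therefore $\Lambda=B^{\perp}\cong P$ is a spectrum of $A$, and $A$ is spectral. I do not expect a genuine obstacle: the heart of the argument is the single identity $\hat 1_A\cdot\hat 1_B=\hat 1_G$ together with the observation that $\hat 1_B$ is supported precisely on $B^{\perp}$. The only point requiring care is the bookkeeping that the abstract spectrum $B^{\perp}\le\widehat G$ coincides, under the chosen identification of $G$ with $\widehat G$, with the prescribed complementary subgroup $P\le G$.
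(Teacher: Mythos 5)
Your proof is correct, and it coincides with the standard argument behind the cited Lemma 3.2 of \cite{KS2021} (which this paper quotes without reproving): the tiling gives $\hat 1_A\cdot\hat 1_B$ supported only at the trivial character, $\hat 1_B$ is nonzero exactly on $B^{\perp}$, hence $\hat 1_A$ vanishes on all nontrivial elements of $B^{\perp}$, and $|B^{\perp}|=|G|/|B|=|A|$, so $B^{\perp}$ is a spectrum. The one point worth stating carefully is the identification you flag at the end: with the paper's dot-product identification of $\widehat{G}$ with $G$ taken for an arbitrary cyclic decomposition, the image of $P$ need not equal $B^{\perp}$ (e.g.\ $B=\langle(1,0)\rangle$, $P=\langle(1,1)\rangle$ in $\Z_p^2$), so "the spectrum is $P$" should be read via the canonical isomorphisms $B^{\perp}\cong\widehat{G/B}\cong\widehat{P}\cong P$, equivalently by choosing the pairing adapted to the decomposition $G=B\oplus P$ exactly as you do.
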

	Note that $\Z_p^2 \times \Z_q^2$ is a complemented group, which means that if $K$ is a subgroup of $G$, then there is a subgroup $L$ of $G$ with $K\bigoplus L=G$. 
	Thus Lemma \ref{lemsubgroup} implies the following. 
	\begin{cor}\label{corspect}
		If $A$ is a subset of $G=\Z_p^2 \times \Z_q^2$ such that $A\bigoplus B=G$, where $B$ is a subgroup of $G$, then $A$ is spectral. 
	\end{cor}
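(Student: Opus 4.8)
\emph{Proof proposal.} The plan is to deduce Corollary \ref{corspect} immediately from Lemma \ref{lemsubgroup}, the only missing ingredient being the assertion (already flagged in the surrounding text) that $G=\Z_p^2\times\Z_q^2$ is a complemented group, i.e. that the subgroup $B$ has a complementary subgroup $P$ with $B\bigoplus P=G$. Once such a $P$ is produced, Lemma \ref{lemsubgroup} applied to the decomposition $G=B\oplus P$ and to the tiling $A\bigoplus B=G$ says verbatim that $A$ is spectral (with spectrum $P$), and we are done.

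So the work is to exhibit $P$. First I would write $G$ as the internal direct sum of its Sylow subgroups $G_p\cong\Z_p^2$ and $G_q\cong\Z_q^2$; since $\gcd(p,q)=1$, every subgroup $B\le G$ splits compatibly as $B=(B\cap G_p)\oplus(B\cap G_q)$ (an element of $B$ of order dividing $pq$ is the sum of its $p$-part and its $q$-part, both of which lie in $B$). Next, $G_p$ is a two-dimensional vector space over $\F_p$ and $B\cap G_p$ is a linear subspace, so it has a complementary subspace $C_p\le G_p$; likewise $B\cap G_q$ has a complement $C_q\le G_q$ inside the $\F_q$-vector space $G_q$. Setting $P:=C_p\oplus C_q$ gives a subgroup of $G$ with $B\oplus P=G$, which is exactly complementedness for this particular $B$.

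Finally I would invoke Lemma \ref{lemsubgroup} with this $P$: the hypotheses ``$G$ is the direct sum of $B$ and $P$'' and ``$A\bigoplus B=G$'' are met, so $A$ is spectral and its spectrum is $P$. There is no real obstacle here; the only point requiring a line of care is the bookkeeping in the splitting step, namely checking that the complements of $B\cap G_p$ and $B\cap G_q$ can be chosen independently in the two Sylow components and then recombined into a single subgroup $P$ complementary to $B$ in $G$. Everything beyond that is a direct citation of the already-proved Lemma \ref{lemsubgroup}.
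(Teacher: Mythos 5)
Your proposal is correct and follows the paper's own route exactly: the paper also deduces the corollary by noting that $\Z_p^2\times\Z_q^2$ is a complemented group and then citing Lemma \ref{lemsubgroup}. The only difference is that you spell out the (standard) verification of complementedness via the Sylow splitting $B=(B\cap G_p)\oplus(B\cap G_q)$ and vector-space complements, which the paper simply asserts; this filling-in is sound.
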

	We are considering finite groups,  so a set \(A\) is spectral with spectrum \(\Lambda\) when \(|A| = |\Lambda|\), and \( \hat{1}_A(\chi) = 0\) for every nonzero \(\chi \in \Lambda - \Lambda\).
	In each case we find a subgroup of \(\Lambda \leq \widehat{G}\) where \(|\Lambda| = |A|\) and \(\hat{1}_A\) vanishes on all nonzero elements of \(\Lambda\), making \(\Lambda\) a spectrum for \(A\).
	
	One can see that $G$ and $\{1\}$ are spectral sets and tiles so in these cases the tile-spectral direction automatically holds.
	Therefore, we may assume $A \subsetneq G$ and $B \subsetneq G$. Hence, there are 
	$1\ne \chi,\chi' \in \widehat{G}$ with $\chi(A)\ne 0$ and $\chi'(B)\ne 0$ by Lemma \ref{lemeqdsubgrp}. Then it follows from $\widehat{{\bf 1}}_A\cdot\widehat{{\bf 1}}_B=|G|\cdot\delta_{1}$ that $\chi(B) =0$ and $\chi'(A)=0$. 
	
	Now we prove the tile-spectral direction by distinguishing cases according to $|A|$.
	For the first two cases we consider \(G = H \times K\) where \(H\) is the Sylow \(p\)-subgroup of \(G\) with order \(p^n\).
	
	\medskip
	\noindent
	\underline{{\bf Case 1:} \(|A| = p\)} \\
	\(A\) has a tiling partner \(B\) of size \(\frac{|G|}{p}\).
	In particular, since \(p^n \nmid |B|\), there must be some character \(\chi_a\) of order \(p^k\) with \(0 < k \le n\) where \(\chi_a(B) \neq 0\), so \(\chi_a(A) = 0\). Then
	\(A\) must project to \(G / \chi_a^\perp\) as a sum of \(\Z_p\) cosets, and as \(|A| = p\), this is a single coset.
	Where \(\chi_b\) is a character of order \(p\) in \(\langle \chi_a \rangle\),
	\(A\) must then be equidistributed among cosets of \(\chi_{b}^\perp\).  Therefore \(\chi_{b}(A) = 0\) and \(\langle \chi_{a} \rangle\) is a spectrum for \(A\).
	
	In our case $k=1$ since the Sylow $p$-subgroup of $\Z_p^2 \times \Z_q^2$ is of exponent $p$. Notice this argument shows that every subset of prime cardinality of every finite abelian group is spectral if it is a tile.
	
	\medskip
	\noindent
	\underline{{\bf Case 2:} \(|A|=p^n\)} \\
	
	In this case, \(A\) has a tiling partner \(B\) where \(|B| = |K|\) and \(p \nmid |B|\).
	\(\hat{1}_B\) cannot vanish on any character of order \(p^k\) for \(1 \leq k\), so \(\hat{1}_A\) vanishes on all such characters.  This means that \(\hat{1}_A\) vanishes on all non-zero characters in \(\hat{H}\), and since \(|\hat{H}| = |A|\), this makes \(\hat{H}\) a spectrum for \(A\).
	
	When \(G = \Z_p^2 \times \Z_q^2\), this covers the case where \(|A|\) is either \(p^2\) or \(q^2\).
	
	\medskip
	\noindent
	Note that in the remaining cases, we take $G=\Z_p^2 \times \Z_q^2$.
	
	\medskip
	\noindent
	\underline{{\bf Case 3:} $|A|=|B|=pq$} \\
	
	We prove that $A$ is spectral. For $B$ the proof is analogue. 
	
	Since $p^2 \nmid |B|$ and $q^2 \nmid |B|$, there exist $u\in \Z_p^2\setminus\{0\}$ and $v\in \Z_q^2\setminus\{0\}$ such that $\chi_u(B)\ne 0$ and $\chi_v(B)\ne 0$. As $\phi(A)\phi(B)=0$ for every character $\phi \ne 1$ we have $\chi_u(A)= 0$ and $\chi_v(A)=0$. Now, it would be enough to prove that $\chi_{u+v}(A)=0$ since then the subgroup of order $pq$ generated by $u$ and $v$ would be a spectrum for $A$ by Lemma \ref{lemcharacterequiv}.
	
	For contradiction, we assume that  $\chi_{u+v}(A)\ne 0$, hence $\chi_{u+v}(B)= 0$.
	This implies that \(B\) projects to \(G / \chi_{u+v}^\perp\) as sum of $\Z_p$ and $\Z_q$ cosets by Lemma \ref{lemcuberule1}.
	As $|B|=pq$, it simply follows that this is either the sum of \(q\) many $\Z_p$ cosets or the sum of \(p\) many $\Z_q$ cosets. The former case implies that $\chi_u(B)=0$, the latter case implies that $\chi_v(B)=0$. Both contradicts our assumption. 
	
	We conclude $\chi_{u+v}(B) \neq 0$ and \(\chi_{u+v}(A) = 0\), and hence \(\langle \chi_{u+v}\rangle\) is a spectrum for \(A\).

	\medskip
	\noindent
	\underline{{\bf Case 4:} $|A|=pq^2$ (or $|A|=p^2q$)} \\
	
	Such a tiling set has tiling partner \(B\) where \(|B| = p\).
	Since \((A,B)\) is a tiling pair and \(q \nmid |B|\), \(\hat{1}_A\) vanishes on every character of order \(q\).
	Since \(p^2 \nmid |B|\), there is a character \(\chi\) of order \(p\) where \(\hat{1}_B(\chi) \neq 0\) and \(\hat{1}_A(\chi) = 0\).
	First consider the case where \(\hat{1}_A\) also vanishes on every character of order \(pq\).
	Then \(\hat{1}_A\) vanishes on all nonzero elements of \(\langle \chi \rangle \oplus \Z_q^2\), which is a subgroup of \(\widehat{G}\) with same size as \(A\) and hence is a spectrum for \(A\).
	
	If there is some character \(\alpha\) of order \(pq\) where \(\hat{1}_A\) does not vanish, then \(\hat{1}_B(\alpha) = 0\).
	B then projects to \(G / \alpha^\perp\) as a sum of \(\Z_p\) and \(\Z_q\) cosets by Lemma \ref{lemcuberule1}, and since \(|B| = p\), this will be a \(\Z_p\) coset.
	\(B\) is therefore contained in a coset of \(\left(\alpha^p\right)^\perp \cong \Z_p^2 \times \Z_q\).
	Choose some \(\xi \in G \backslash \left(\alpha^p\right)^\perp\), then \(G = \bigsqcup_{i \in \Z_q} \left(\alpha^p\right)^\perp + i\xi\).
	For \(i \in \Z_q\), define \(A_i \coloneqq A \cap \left(\left(\alpha^p\right)^\perp + i\xi \right) - i\xi\).
	Then for each \(i\), \((A_i, B)\) is a tiling pair for \(\left(\alpha^p\right)^\perp\).
	Note that \(\hat{1}_A\) must vanish on any point where \(\hat{1}_{A_i}\) does for all \(i \in \Z_q\). 
	
	Since \(B\) tiles \(\left(\alpha^p\right)^\perp\), if there is some character \(\beta\) of order \(pq\) in  \(\widehat{\left(\alpha^p\right)^\perp}\) where \(\hat{1}_B\) vanishes, then \(B\) projects to \(\left(\alpha^p\right)^\perp / \beta^\perp \cong \Z_{pq}\) as a union of \(\Z_p\) and \(\Z_q\) cosets by Lemma \ref{lemcuberule1}, and again due to the size of \(B\), this will be a single \(\Z_p\) coset.
	This means that \(B\) is contained in \(\Z_p^2\) and hence determines a direction of order \(p\).
	This direction is missed by \(A\) 
	by Theorem 3.1 from \cite{atenetal}. Note that the equivalence of case (h) in Theorem 3.1 is only proved in the case $n=p$ but the proof works verbatim for an arbitrary natural number using Lemma \ref{lemcharacterequiv}. 
	Since \(|A| = \frac{|G|}{p}\), \(A\) is a graph and hence is spectral. 
	
	Otherwise each \(\hat{1}_{A_i}\) vanishes on every character of order \(pq\) in \(\widehat{\left(\alpha^p\right)^\perp}\) and hence so does \(\hat{1}_A\).
	We claim that $\hat{1}_A$ vanishes on every nontrivial elements of $\langle \chi \rangle \oplus \Z_q^2$. We have seen above this for characters of order $q$ and $pq$. The characters of order $p$ of this subgroup are powers of $\chi$, so the result follows from Lemma \ref{lemcharacterequiv}.
	This in turn implies \(\langle \chi \rangle \oplus \Z_q^2\) is a spectrum for \(A\) since $|\langle \chi \rangle \oplus \Z_q^2|=|A|$.
	
	\section*{Acknowledgement}
	\ \  \ \ The second author was supported by Premium Postdoctoral Fellowship of the Hungarian Academy of Sciences and by the Hungarian National
	Foundation for Scientific Research, Grant No. K124749.

	The third author was supported by the J\'anos Bolyai Research Fellowship and by the New National
	Excellence Program under the grant number \'UNKP-20-5-ELTE-231. 
	
	Application Domain Specific Highly Reliable IT Solutions” project  has  been implemented with the support provided from the National Research, Development and Innovation Fund of Hungary, financed under the Thematic Excellence Programme TKP2020-NKA-06 (National Challenges Subprogramme) funding scheme.

\end{document}